\documentclass[letterpaper]{article} 
\usepackage{uai2020}
\usepackage[margin=1in]{geometry}
\usepackage{times}

\usepackage{hyperref}
\usepackage{url}
\usepackage{amsmath}

\usepackage{microtype}
\usepackage{graphicx}
\usepackage{subfigure}
\usepackage{booktabs} 
\usepackage{amsmath}
\usepackage{amsthm}
\usepackage{amssymb}
\usepackage{xcolor}
\usepackage{textcomp}
\usepackage{wrapfig}
\usepackage{framed}
\usepackage{color}
\usepackage{enumerate}
\usepackage{enumitem}
\usepackage{graphicx}
\usepackage{natbib}
\usepackage{algorithm}
\usepackage{algpseudocode}

\setlength\bibhang{0pt}

\newtheorem{thm}{Theorem}
\newtheorem*{thm*}{Theorem}

\newtheorem{asu}[thm]{Assumption}
\newtheorem{cor}[thm]{Corollary}
\newtheorem{prop}[thm]{Proposition}
\newtheorem*{prop*}{Proposition}
\newtheorem*{cor*}{Corollary}

\newtheorem{lem}[thm]{Lemma}

\newcommand{\reals}{\mathbb{R}}

\newcommand{\mc}[1]{\mathcal{#1}}

\newcommand{\Mod}[1]{\ (\mathrm{mod}\ #1)}
\newcommand{\fw}{\texttt}
\theoremstyle{definition}
\newtheorem{example}[thm]{Example}
\newtheorem{definition}[thm]{Definition}

\title{Bounding the expected run-time of nonconvex optimization with early stopping}

\author{  {\bf Thomas Flynn}
  \And {\bf Kwang Min Yu} \And {\bf Abid Malik}  \\ \\
   Computational Science Initiative \\
  Brookhaven National Laboratory \\
  Upton, NY 11973  \\\And {\bf Nicolas D'Imperio} \And {\bf Shinjae Yoo}  
 }
%

\begin{document}
\maketitle

\begin{abstract}
  This work examines the convergence of stochastic gradient-based optimization algorithms that use early stopping based on a validation function. The form of early stopping we consider is that optimization terminates when the norm of the gradient  of a validation function falls below a threshold. We derive conditions that guarantee this stopping rule is well-defined, and provide bounds on the expected number of iterations and gradient evaluations needed to meet this criterion. The guarantee accounts for the distance between the training and validation sets, measured with the Wasserstein distance. We develop the approach in the general setting of a first-order optimization algorithm, with possibly biased update directions subject to a geometric drift condition. We then derive bounds on the expected running time for early stopping variants of several algorithms, including stochastic gradient descent (SGD), decentralized SGD (DSGD), and the stochastic variance reduced gradient (SVRG) algorithm. Finally, we consider the generalization properties of the iterate returned by early stopping.
\end{abstract}

\section{\uppercase{Introduction}}
This work considers the minimization of a differentiable  and possibly nonconvex objective function:
  \begin{equation}\label{main-opt-problem}
    \min_{x\in \reals^d}\, f(x).
  \end{equation}
For nonconvex problems, a  generally accepted notion of success for algorithms that use only first-order information is that an \textit{approximate stationary point} is generated. These are points $x \in \reals^d$ where the norm of the gradient of $f$ is small.    In a typical machine learning scenario, $f$ is the average loss over a dataset of training examples, and it is common to solve  problem \eqref{main-opt-problem}
  using stochastic gradient-based optimization, for instance, stochastic gradient descent  (SGD; see Algorithm \ref{algo1}). The success of SGD in machine learning problems has led to many extensions of the algorithm, including variance-reduced and distributed variants (reviewed in Section 1.1).

  A common approach to stopping optimization in practice is to use early stopping, in which a performance criterion is periodically evaluated on a validation function and optimization terminates once this condition is met. 
  However, there is little theoretical work on the run-time of nonconvex optimization with such early stopping rules. In general, one would expect that the run-time and performance will depend on several factors, including the similarity between the validation and training functions, the desired level of solution accuracy, and internal settings of the optimization algorithm, such as  learning rates.
  
  In this work, we carry out an analysis of early stopping when the criterion is that the algorithm has generated an approximate stationary point for a validation function.
  Formally, we consider the stopping time defined as the first time, or iteration number, that an iterate has the property of being an approximate stationary point for the validation function, and we derive upper bounds on the expected value of this stopping time. Furthermore, although the stopping time is defined in terms of stationarity of the \textit{validation} function, we also derive a bound on the stationarity gap of the \textit{training} function at the resulting iterate, in terms of the Wasserstein distance between the training and validation sets. As an extension, we describe how to leverage Wasserstein concentration results to bound the expected stationarity gap with respect to the \textit{testing} distribution from which both the training and validation datasets are drawn.

The analysis is carried out for several procedures, including stochastic gradient descent (SGD),  decentralized SGD (DSGD), and the stochastic variance reduced gradient (SVRG) algorithm,  The result is new bounds on the expected number of Incremental First-order Oracle (IFO) calls needed by these algorithms to generate approximate stationary points. We believe the general technique used to obtain the results will be useful for analyzing the expected running time of other optimization algorithms as well.

\,
\noindent\textbf{Main contributions.} Our main contributions include:
\begin{itemize}[leftmargin=1em]
\item[$\circ$] We present a non-asymptotic analysis of SGD with early stopping that leads to a bound on the expected amount of resources needed to find approximate stationary points of the training function, including the number of iterations (Proposition \ref{prop:abstract-bias}) and  gradient evaluations (Corollary \ref{sgd-ifo}). The analysis allows for biases in the update direction, subject to a geometric drift condition on the error terms (specified in Assumption \ref{asu:bias-grad}.)
\item[$\circ$] We specialize the results  to decentralized SGD, a variant of SGD designed for distributed computation, resulting in upper bounds on  the number of iterations  (Proposition  \ref{decentralized-main}) and gradient evaluations (Corollary  \ref{cor:decentralized-ifo}) needed by the algorithm. This is done by modeling DSGD as a biased form of SGD, whose bias is controlled in part by the diffusion coefficient of the network communication graph.
\item[$\circ$] We derive a run-time bound for a variant of nonconvex SVRG with early stopping, obtaining a bound on the expected number of iterations (Proposition  \ref{prop:svrg-conv}) and  IFO calls (Corollary  \ref{svrg-ifo}) need to generate  approximate stationary points.
\item[$\circ$] We demonstrate how Wasserstein concentration bounds can be leveraged to bound the generalization performance of the iterate returned by  the algorithms (Section \ref{sect:genprop}), expressed in terms of the number of samples used to construct the datasets, and properties of the testing distribution.
\end{itemize}
\subsection{Related work}
The study of stochastic optimization goes back (at least) to the pioneering work of Robbins and Monro \citep{robbins1951}.  Subsequent developments include the ordinary differential equation (ODE) method \citep{ljung1977analysis} and stochastic approximation \citep{kushner1978stochastic}, which emphasizes the asymptotic behavior of the algorithms.
Asymptotic performance of biased SGD has been considered in \citep{bertsekas} which establishes the asymptotic convergence of the algorithm to stationary points.
 
The randomized stochastic gradient (RSG) method \citep{ghadimi-lan} uses randomization to obtain a non-asymptotic performance guarantee for SGD applied to nonconvex functions. In one interpretation of RSG, the algorithm (e.g., SGD) is run for a fixed number of iterations, and  a random iterate is selected as the final output of optimization (alternatively, the algorithm is executed for a random number of steps, after which the final iterate is returned.) The randomization technique has became a standard tool for analyzing optimization algorithms in the nonconvex setting
  \citep{ghadimi2016accelerated,
    decentralized,
    reddi2016stochastic,
    riemannian-svrg,
    scsgnonconvex,
    lian2015}.
  Follow-up works have employed randomization for the analysis of nonconvex optimization in diverse algorithmic settings, such as asynchronous \citep{lian2015} and decentralized \citep{decentralized} optimization.
  
 Machine learning problems often involve an objective that is a finite sum of functions, and, in this setting, variance reduction techniques lead to improved rates of convergence over SGD \citep{svrg,sag,saga}.  Analysis of variance reduction has extended beyond convex functions, from an application to principal components analysis \citep{Shamir2015} to general nonconvex functions  \citep{zeyuan-svrg,  reddi2016stochastic,scsgnonconvex}. 
  
  In contrast to the aforementioned works, in which randomization is used to analyze performance in the nonconvex case, this work considers algorithms that use a  different approach to stopping optimization, based on periodically evaluating a performance criterion with respect to a validation function. There are several variants of early stopping that appear in practice. For instance, one approach is to train until the error on a validation set increases \citep{wang12}, \citep{semi2015}, or there is no improvement for a number of epochs \citep{perceptual19}. Alternatively, one can train the model for a fixed number of epochs, and then take the parameter from the epoch at which validation error is lowest \citep{jaderberg17a}, \citep{lee18c}, \citep{franceschi19a}.
 Despite the prevalence of early stopping, there is comparatively little work on the analysis of this strategy in the nonconvex setting, and our work aims to fill this gap
 
 Several recent works also have explored the average amount of resources needed to reach a desired performance level in optimization. The expected running time of a stochastic trust region algorithm
 is given in \citep{blanchettime}, based on a renewal-reward martingale argument. This proof technique was also used to analyze the expected run time of a  stochastic line search method \citep{scheinberglinesearch}. 
  Our convergence analysis is similar in spirit, as we also are interested in the expected amount of time or other resources required to meet the performance guarantee. However, our focus is on different algorithms (SGD, DSGD, and SVRG), and the variants of these algorithms that we consider contain explicit stopping mechanisms based on validation functions.
Other recent work considering the theoretical aspects of early stopping include \citep{duvenaud16}, where the authors developed an interpretation of early stopping in terms of variational Bayesian inference. Early stopping for a least squares problem in a reproducing kernel Hilbert space has been treated in \citep{lin2016optimal}, while the implications of early stopping on  generalization were studied in \citep{tfgb}. To the authors' knowledge, the present work is the first to analyze run-time when using a validation function for early stopping in nonconvex optimization.

\section{\uppercase{Preliminaries}}
Let
$f : \reals^q \times \reals^d \to \reals$
be a loss function whose value we denote by $f(y,x)$. Intuitively, the variable $y$ represents an (input, output) pair, and $x$ represents the parameters of a model. Throughout, we shall assume that the gradient of the loss function  is Lipschitz continuous:
  \begin{asu}\label{asu:lipgrad}
    The function
    $f:\reals^q \times \reals^d \to \reals$ is
    bounded from below by
    $f^{*} \in \reals$, and $\nabla_x f$ is $L$-Lipschitz continuous as a function of $x$: $ \forall y \in \reals^q, x_1,x_2 \in \reals^d,$
    \[\|\nabla_x f(y,x_1) - \nabla_x f(y,x_2)\| \leq L \|x_1 - x_2\|. \]
    \end{asu}
This is a standard assumption that is also referred to as smoothness of the loss function.  Where appropriate, we will make a distinction between the training function $f_T$, which is used to calculate gradients, and a validation function $f_V$ used to decide when to stop training:
  \begin{asu}\label{asu:finite-sum-fns}
  The function $f_T$ is defined using a set $Y_T \subseteq \mathbb{R}^{q}$  as
  $
  f_{T}(x) = \frac{1}{n_{T}}\sum_{y \in Y_T}f(y,x)$, where $n_T = |Y_T|$,
     and the function $f_V$ is defined using a set 
  $Y_V \subseteq \mathbb{R}^{q}$ as
  $
  f_{V}(x) = \frac{1}{n_{V}}\sum_{y \in Y_V}f(y,x)$,
  where $n_V = |Y_V|$.
  \end{asu}
  Note that there is no assumption that the validation and training sets are disjoint.
  At times we will assume  a bound on the variance of stochastic gradients of $f_T$:
  \begin{asu}\label{asu:ft-var-bias}
    There is a  $\sigma_v^2 \geq 0$ such that
    $\forall\, \,x\in\reals^d$,
    $$
    \frac{1}{n_{T}}\sum\limits_{y \in Y_{T}}\left\| \nabla_x f(y,x) - \nabla f_{T}(x)\right\|^{2}
    \leq 
    \sigma_v^2.$$
    \end{asu}
  In the SGD and DSGD algorithms considered below, optimization takes place on the training function, while the stopping criteria is evaluated using the validation function. To guarantee that this leads to well-defined behavior, we will make use of a bound on the distance between the training and validation functions. Intuitively, the functions $f_T$ and $f_V$ will be close when the datasets $Y_T$ and $Y_V$ are similar. Formally, the datasets $Y_T$ and $Y_V$ determine probability measures $\mu_T$ and $\mu_Y$, defined as 
  $\mu_T = \frac{1}{n_T}\sum_{y\in Y_T}\delta_y$ and $\mu_V = \frac{1}{n_V}\sum_{y\in Y_V}\delta_y,$
  respectively, 
  where $\delta_y$ is the delta measure  $\delta_y(A) = 1_{y \in A}$ for all sets $A$.
 We can compare these measures using the Wasserstein distance, which is defined as follows.

 For $q\geq 1, p\geq 1$, denote by 
$\mathcal{P}_p(\reals^q)$ 
the probability measures on 
$\reals^q$
with finite moments of order $p$. 
Recall that a coupling of probability measures $\mu_1$ and $\mu_2$ is a probability measure $\gamma$ on $\reals^q\times \reals^q$ such that for all measurable sets $A$, 
$\gamma(A \times \reals^q) = \mu_1(A)$ and 
$\gamma(\reals^q \times A) = \mu_2(A)$.  Intuitively, a coupling transforms data distributed like $\mu_1$ into a data  distributed according to $\mu_2$ (and vice versa).
The $p$-Wasserstein distance on $\mc{P}_{p}(\reals^q)$, denoted by $d_p$, is defined as:
\begin{equation}\label{def:wass}
  d_{p}(\mu_1,\mu_2)\hspace{-0.2em}
  = \hspace{-0.5em}
\inf_{\gamma \in \Gamma(\mu_1,\mu_2)}\hspace{-0.2em}
\left(
\mathop{\mathbb{E}}\limits_{(x_1,\, x_2)\sim\gamma}\hspace{-0.4em}
  \left[\, 
    \left\|x_1-x_2\right\|^{p}\,
  \right]
\hspace{-0.1em}\right)^{\frac{1}{p}}
\end{equation}
where $\Gamma(\mu_1,\mu_2)$ is the set of all couplings of $\mu_1$ and $\mu_2$.
For more details, including a proof that this definition indeed satisfies the axioms of a metric, the reader is referred to  Chapter 6 of \citep{villani2008}.

In order to link the distance of the functions
$\nabla f_T$ and
$\nabla f_V$ to the distance between the empirical measures $\mu_T$ and $\mu_V$, the following assumption will be useful:
  \begin{asu}\label{asu:empirical-distance}
    The function $\nabla_x f$ is $G$-Lipschitz continuous as a function of $y$: $ \forall x \in \reals^d, y_1,y_2 \in \reals^q,$
    \[\|\nabla_x f(y_1,x) - \nabla_x f(y_2,x)\| \leq G \|y_1 - y_2\|. \]
  \end{asu}
Assumption \ref{asu:empirical-distance} implies the following bound:   $\forall x\in\reals^d,$
  \begin{equation}\label{grad-dist}
    \|\nabla f_V(x) - \nabla f_{T}(x)\| \leq G d_1(\mu_V,\mu_T).
    \end{equation}
 To see that  \eqref{grad-dist} follows from Assumption \ref{asu:empirical-distance}, let $\gamma$ be any coupling of $\mu_V$ and $\mu_T$. Then
 \begin{align*}
   \left\|\nabla f_{V}(x)\hspace{-0.2em} -\hspace{-0.2em} \nabla f_{T}(x)\hspace{-0.1em}\right\|\hspace{-0.2em}&=\hspace{-0.2em}
   \left\|
   \mathop{\mathbb{E}\hspace{1em}}\limits_{(u,v) \sim \gamma}
          \hspace{-0.45em}[ \nabla_x f(u,x\hspace{-0.1em})\hspace{-0.2em} -\hspace{-0.2em} \nabla_x f(v,x\hspace{-0.1em})]\hspace{-0.0em}\right\|
                  \\&\leq
                  G \mathop{\mathbb{E}}\limits_{(u,v) \sim \gamma}
                  \left[             \left\| y_1-y_2\right\| \right]. 
 \end{align*}
 Taking the infimum over all couplings of $\mu_V$ and $\mu_T$ yields Equation \eqref{grad-dist}.
 For  an example of a function that satisfies
 Assumption \ref{asu:empirical-distance}, consider the following:
  \begin{example}\label{ex:lip}
    Let $g :\reals^q \times \reals^d \to \reals$ be any smooth (that is, infinitely differentiable) function, and let
  $h : \reals^d \to \reals^d$ 
  be the function that applies the hyperbolic tangent function to each of its components: 
  $h(x) = (\tanh(x_1),\hdots, \tanh(x_d))$.
  Define
  $f(y,x) = g( y, h(x))$,
  and further suppose that the training data are bounded: 
  $\|y\| \leq J$ for all $y \in Y_T \cup Y_{V}$.
  To guarantee that
  Assumption \ref{asu:empirical-distance} is satisfied, it is sufficient that the derivative
  $\frac{\partial^{2} f}{\partial x\partial y}(y,x)$
  is  bounded as a bilinear map, uniformly in $y$ and $x$ (Proposition 2.4.8 in \citep{abraham2012manifolds}).
  It can be shown that this is indeed the case, and we may take $G = \sup_{\|y\| \leq J, \|x\| \leq \sqrt{d}}\|\tfrac{\partial ^2 g}{\partial x\partial y}(y,x)\|$. We defer the details to an appendix.
  \end{example}
  In our analyses the notion of success is that an algorithm generates an approximate stationary point:
  \begin{definition}
    A point $x \in \reals^d$ is an \textit{$\epsilon$-approximate stationary point} of 
    $f$ if $\|\nabla f(x)\|^{2} \leq \epsilon$.
  \end{definition}
 We measure the complexity of algorithms according to how many function value and gradient queries they make. 
Formally, an IFO is defined as follows \citep{agarwal2015lower}: 
\begin{definition}
An IFO takes a parameter $x$ and an input $y$ and returns the pair $(f(y,x), \nabla_{x} f(y,x))$.
\end{definition}

In
Appendix \ref{stoch-proc}, we briefly recall the notion of filtration, stopping times, and other concepts from stochastic processes that will be used in this paper. 
\section{\uppercase{Biased SGD}}\label{sect:bias}
In this section we present our analysis of SGD with early stopping. The steps of the procedure are detailed in in
Algorithm \ref{algo1}. Starting from an initial point $x_1$, the parameter is updated  at each iteration with an approximate gradient $h_n$, using a step-size $\eta$. The norm of the gradient of the validation function is evaluated every $m$ iterations, and the algorithm ends when the squared norm of the gradient falls below a threshold $\epsilon$.

We assume that the update direction $h_t$ is a sum of two components, $v_t$ and $\Delta_t$, that represent an unbiased gradient estimate and an error term, respectively:
  \begin{equation}\label{w-form}
    h_{t}
    =
    v_{t}
    +
    \Delta_{t}.
  \end{equation}
  Let $\{\mc{F}_{t}\}_{t\geq 0}$
  be a filtration such that $x_1$ is $\mc{F}_{0}$-measurable,
  and
  for all $t>1$, the variables $(v_t,\Delta_t)$ are $\mc{F}_{t}$-measurable.
  Our assumptions on the $v_t$ are as follows.
\begin{asu}\label{asu:vanilla-grad}
 For any $t \geq 1$, it holds that
  \begin{align} 
    &\mathbb{E}\left[v_{t} - \nabla f_{T}(x_t) \mid \mc{F}_{t-1}\right] = 0,  \label{delta-prop-1} \\
    &\mathbb{E}\left[
    \left\|v_{t} - \nabla f_{T}(x_t)\right\|^{2} \mid \mc{F}_{t-1}
    \right] \leq \sigma_v^2. \label{delta-prop-2}
    \end{align}
\end{asu}
\begin{figure}
\begin{minipage}{0.48\textwidth}\vspace{-1em}
\begin{algorithm}[H]
  \caption{SGD with early stopping\label{algo1}}
\fw{\phantom{ }1:}\,   \textbf{input:} Initial point $x_1 \in \reals^d$ \\
\fw{\phantom{ }2:}\, $t = 1$  \\
\fw{\phantom{ }3:}\, /* check if stopping criteria is satisfied. */ \\
\fw{\phantom{ }4:}\, \textbf{while} $\|\nabla f_{V}(x_t)\|^2 > \epsilon$ \textbf{do} \\
\fw{\phantom{ }5:}\, \quad /* if not, perform an epoch of training. */ \\
\fw{\phantom{ }6:}\, \quad \textbf{for} $n=t$ \textbf{to} $t+m-1$ \textbf{do} \\
\fw{\phantom{ }7:}\, \quad\quad  $x_{n+1} = x_{n}  - \eta h_n$ \label{line:gd-update} \\
\fw{\phantom{ }8:}\,  \quad  \textbf{end} \\
\fw{\phantom{ }9:}\, \quad $t = t + m$ \\
\fw{10:}\,   \textbf{end} \\
\fw{11:}\, /* once criteria is met, return current iterate. */ \\
\fw{12:}\, \textbf{return} $x_{t}$
\end{algorithm}
\end{minipage}
\end{figure}
Assumption \ref{asu:vanilla-grad} states that the update terms $v_t$ are valid approximations to the gradient,
and have bounded variance.
For the bias terms
  we assume the following growth condition:
  \begin{asu}\label{asu:bias-grad}
    There is a sequence of random variables
    $V_1,V_2,\hdots$, and
    $U_1,U_2,\hdots$  such that for all $t\geq 1$ the pair $(V_t,U_t)$ is $\mc{F}_{t}$-measurable,
     $\|\Delta_t\|^{2} \leq V_t,$
  and the
  $V_t$
  satisfy the following geometric drift condition:
  For some pair of constants $\alpha \in [0,1)$ and $\beta \geq 0$,
    \begin{align}
      & V_1 \leq \beta,  \label{v1-eqn}\\
      \forall t \geq 2 ,\quad & V_{t} \leq \alpha V_{t-1} + U_{t-1}, \label{vn-eqn} \\
      \forall t \geq 1 ,\quad &  \mathbb{E}\left[U_{t} \mid \mc{F}_{t-1}\right] \leq \beta \label{bn-eqn}. 
    \end{align}
  \end{asu}
  Assumption \ref{asu:bias-grad} models a scenario where the bias dynamics  are  a combination of contracting and expanding behaviors. Contraction shrinks the error and is  represented by a factor $\alpha$. External noise, represented by the $U_t$ terms, prevents the error from vanishing completely. Note that the assumption would be satisfied in the unbiased case by simply setting $V_t= 0$.  
    
  We can now state our result on the expected number of iterations required by biased SGD with early stopping:
  \begin{prop}\label{prop:abstract-bias}
    Let $\{x_t\}_{t\geq 1}$ be as in Algorithm \ref{algo1}. Let
    Assumptions 
    \ref{asu:lipgrad}, 
    \ref{asu:finite-sum-fns},
    \ref{asu:empirical-distance},
    \ref{asu:vanilla-grad}, and 
    \ref{asu:bias-grad} hold.    
    For
    $\epsilon > 0$, let
    $\tau(\epsilon)$ be the stopping time
    $$
    \tau(\epsilon)
    \hspace{-0.2em}=\hspace{-0.2em}
    \inf\{
    n \hspace{-0.2em}\geq\hspace{-0.2em} 1 \hspace{-0.2em}
    \mid
    \hspace{-0.2em} n\hspace{-0.1em} \equiv\hspace{-0.1em}1\hspace{-0.2em} \Mod m \hspace{-0.1em}\text{ and } \|\nabla f_V(x_n)\|^{2}\hspace{-0.2em} \leq\hspace{-0.1em} \epsilon\hspace{-0.1em} \}.
    $$
    Suppose that $\eta \leq \frac{1}{L}$ and 
    $$
    \epsilon
    -
    4 L m \eta\sigma_v^2
    -
    4 m \beta/(1-\alpha) - 2 G^2 d_1(\mu_V,\mu_T)^2 > 0.
    $$
    Then
       \begin{equation*}\label{claimed-bias}
      \begin{split}
        &\mathbb{E}[\tau(\epsilon)]
        \leq \\&
        \frac{
 G^2 d_1(\mu_V,\hspace{-0.1em}\mu_T)^2 \hspace{-0.1em} +\hspace{-0.1em} 2(f_T(x_1\hspace{-0.1em})\hspace{-0.1em} -\hspace{-0.1em} f^{*})/\eta\hspace{-0.1em} +\hspace{-0.1em} \epsilon\hspace{-0.1em}+\hspace{-0.1em}2 \beta /(1\hspace{-0.1em}-\hspace{-0.1em}\alpha)}
           {\epsilon/(2 m) - 2 L\eta\sigma_v^2 -2 \beta /(1-\alpha) - G^2 d_1(\mu_V,\mu_T)^2/m}.
               \end{split}
    \end{equation*}
    Furthermore, the gradient of $f_T$ at $x_{\tau(\epsilon)}$ satisfies
    \begin{equation}\label{holder-nabla}
      \|\nabla f_{T}(x_{\tau(\epsilon)})\|^2 \leq
      \left(\sqrt{\epsilon} + Gd_1(\mu_V,\mu_T)\right)^2.
      \end{equation}
  \end{prop}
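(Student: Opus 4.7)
The plan is to combine a one-step descent lemma for the smooth nonconvex training objective $f_T$ with a Lyapunov argument that absorbs the bias via the geometric drift of $V_t$, and then apply an optional-stopping argument to convert a per-epoch expected decrease of the Lyapunov function into a bound on $\mathbb{E}[\tau(\epsilon)]$. The auxiliary inequality \eqref{holder-nabla} will fall out immediately from the definition of $\tau(\epsilon)$ together with the Wasserstein gradient comparison \eqref{grad-dist}.

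\textbf{Descent inequality and handling the bias.} Starting from $L$-smoothness of $f_T$ (inherited from Assumption \ref{asu:lipgrad}) I would write
\[
f_T(x_{n+1}) \leq f_T(x_n) - \eta\langle\nabla f_T(x_n),\,v_n+\Delta_n\rangle + \tfrac{L\eta^2}{2}\|v_n+\Delta_n\|^2.
\]
Taking conditional expectation given $\mc{F}_{n-1}$ and using Assumption \ref{asu:vanilla-grad} gives
$\mathbb{E}[\langle \nabla f_T(x_n),v_n\rangle\mid\mc{F}_{n-1}] = \|\nabla f_T(x_n)\|^2$ and
$\mathbb{E}[\|v_n\|^2\mid\mc{F}_{n-1}] \leq \|\nabla f_T(x_n)\|^2 + \sigma_v^2$, while
$\langle\nabla f_T(x_n),\Delta_n\rangle$ is bounded via $2ab\leq \tfrac{1}{2}a^2+2b^2$ and $\|\Delta_n\|^2\leq V_n$. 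Using $\eta\leq 1/L$ the coefficient in front of $\|\nabla f_T(x_n)\|^2$ becomes $\geq\eta/2$, leaving an additive $\eta V_n$ (up to constants) and an $L\eta^2\sigma_v^2/2$ term. This is where the drift assumption \ref{asu:bias-grad} is essential: iterating \eqref{vn-eqn}--\eqref{bn-eqn} and \eqref{v1-eqn}, $\mathbb{E}[V_n]\leq \beta/(1-\alpha)$, and more useful still, one can define an augmented potential $\Phi_n = f_T(x_n) + c\, V_n$ for an appropriate constant $c$ (of order $\eta/(1-\alpha)$) so that $\mathbb{E}[\Phi_{n+1}-\Phi_n\mid\mc{F}_{n-1}]\leq -\tfrac{\eta}{2}\|\nabla f_T(x_n)\|^2 + \eta\beta/(1-\alpha) + L\eta^2\sigma_v^2/2$, which is a clean drift inequality for the augmented process.

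\textbf{From per-iteration drift to expected stopping time.} Next I would sum this inequality across one epoch of length $m$ and connect it to the stopping criterion at the epoch start. On the event $\{\tau(\epsilon)>n\}$ with $n\equiv 1\Mod m$, the definition of $\tau$ gives $\|\nabla f_V(x_n)\|^2 > \epsilon$, and combining with \eqref{grad-dist} and $(a+b)^2\leq 2a^2+2b^2$ yields $\|\nabla f_T(x_n)\|^2 \geq \epsilon/2 - G^2 d_1(\mu_V,\mu_T)^2$. Because an epoch contains $m$ updates but only the first iterate in the epoch is guaranteed to have large $\|\nabla f_T\|$, the average decrement per iteration is the bracket quantity divided by $m$, minus the noise/bias overhead; this is precisely the denominator in the bound. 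Define the stopped sequence $\Phi_{n\wedge\tau(\epsilon)}$ and apply the optional stopping theorem (the augmented potential is bounded below by $f^*$, so no integrability obstacle arises). Telescoping yields
\[
\mathbb{E}[\tau(\epsilon)]\Bigl(\tfrac{\epsilon}{2m}-2L\eta\sigma_v^2-\tfrac{2\beta}{1-\alpha}-\tfrac{G^2 d_1(\mu_V,\mu_T)^2}{m}\Bigr) \;\leq\; f_T(x_1)-f^* + (\text{boundary terms from } V_1, \epsilon, d_1),
\]
and the positivity assumption on $\epsilon$ makes the parenthesized factor strictly positive so that one may divide. Rearranging (and converting $f_T(x_1)-f^*$ into the numerator form stated) yields the claimed bound.

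\textbf{Last claim and main obstacle.} Inequality \eqref{holder-nabla} is the easy part: at $n=\tau(\epsilon)$ we have $\|\nabla f_V(x_n)\|\leq\sqrt\epsilon$ by definition, and the triangle inequality together with \eqref{grad-dist} gives $\|\nabla f_T(x_n)\|\leq \sqrt\epsilon + G d_1(\mu_V,\mu_T)$, which squared is \eqref{holder-nabla}. The main obstacle will be the bookkeeping in the middle step: choosing the Lyapunov weight $c$ so that the $V_n$-contributions from the descent lemma are exactly cancelled by the drift of $c\,V_n$, getting the factor-of-$m$ correct when only the epoch-starting iterate triggers the stopping test, and carefully tracking the boundary terms ($V_1\leq\beta$, the fact that the stopping can occur mid-chain, and the initial $\epsilon$ surplus at the moment of stopping) so that the numerator matches the stated expression. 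Once the augmented-potential trick is in place, however, the optional-stopping step is essentially mechanical.
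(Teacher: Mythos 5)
Your proposal is correct and takes essentially the same route as the paper's proof: smoothness descent on $f_T$ with the decomposition $h_t = v_t + \Delta_t$, absorption of the bias through the drift condition (your augmented potential $\Phi_n = f_T(x_n) + c\,V_n$ is just a repackaging of the paper's direct summation, which yields $\mathbb{E}\bigl[\sum_{t=1}^{\tau\wedge n} V_t\bigr] \leq \tfrac{\beta}{1-\alpha}\left(\mathbb{E}[\tau\wedge n]+1\right)$), optional stopping to annihilate the $\nabla f_T(x_t)^{T}\delta_t$ terms, the epoch-start indicator counting with the $\epsilon/2 - G^2 d_1(\mu_V,\mu_T)^2$ lower bound from the Wasserstein comparison, and the triangle inequality plus \eqref{grad-dist} for \eqref{holder-nabla}. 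The only point to tighten is your appeal to optional stopping at the possibly unbounded time $\tau(\epsilon)$: the paper instead applies it to the bounded truncations $\tau(\epsilon)\wedge n$ and passes to the limit via the monotone convergence theorem, which is the standard way to make your ``no integrability obstacle'' remark rigorous.
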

We present a sketch of the proof below, saving the full proof for an appendix.
\begin{proof}[Proof sketch]
  To emphasize the main ideas, we make the simplifying assumptions that there are no error terms ($\Delta_t=0$), the Lipschitz constant for the gradient is $L=2$, and the training  and validation sets are equal ($Y_T=Y_V$).
   To establish a bound on  $\mathbb{E}[\tau(\epsilon)]$, we first consider the truncated stopping time $\tau(\epsilon) \wedge n$, defined as the minimum of $\tau(\epsilon)$ and an arbitrary iteration number $n$. We  find an upper bound on $\mathbb{E}[\tau(\epsilon) \wedge n]$ that is independent of $n$, and appeal to the monotone convergence theorem to conclude that this same bound must hold for $\mathbb{E}[\tau(\epsilon)]$.
   
   Using a quadratic growth bound  that follows from the Lipschitz property of the gradient (Equation \eqref{eqn:taylor} in the appendix), for any $n$ it holds that
   \begin{equation*}
     \begin{split}
       &f(x_{\tau(\epsilon) \wedge n+1}) \leq 
       f(x_1) - \eta(1-\eta)\sum\limits_{t=1}^{\tau(\epsilon) \wedge n}\|\nabla f(x_{t})\|^2  \\
       &\quad-\eta(1- 2 \eta) \sum\limits_{t=1}^{\tau(\epsilon)\wedge n}\nabla f(x_{t})^T\delta_t +
   \eta^2 \sum\limits_{t=1}^{\tau(\epsilon)\wedge n}
   \|\delta_t\|^2.
     \end{split}
     \end{equation*}
   Taking expectations and applying Proposition \ref{mart-like-thm}, we obtain 
   \begin{equation*}
     \begin{split}
       \mathbb{E}\hspace{-0.1em}\left[f(x_{\tau(\epsilon) \wedge n + 1})\right]
       \hspace{-0.1em}\leq &f(x_1)\hspace{-0.1em}-\hspace{-0.1em}\eta(1\hspace{-0.2em}-\hspace{-0.2em}\eta) \mathbb{E}\hspace{-0.3em}\left[\hspace{-0.1em}\sum\limits_{t=1}^{\tau(\epsilon) \wedge n}\hspace{-0.3em}\|\nabla f(x_{t})\|^2\hspace{-0.1em}\right] \\&+
   \eta^2\sigma_v^2\,\mathbb{E}[\tau(\epsilon)\wedge n].
   \end{split}
   \end{equation*}
   Rearranging terms and noting that $f(x_{\tau(\epsilon) \wedge n + 1}) \geq f^{*}$,
   \begin{equation*}
     \begin{split}
&\eta(1-\eta)\mathbb{E} \left[ \sum\limits_{t=1}^{\tau(\epsilon)\wedge n}\|\nabla f(x_t)\|^{2}\right] \leq\\ &\quad\quad f(x_1) - f^*  + \eta^2 \sigma_v^2\,\mathbb{E}[\tau(\epsilon) \wedge n].
     \end{split}
     \end{equation*}
 Next, using the definition of $\tau(\epsilon)$,  we have
 \begin{equation*}
   \begin{split}
     \frac{\epsilon\left(\mathbb{E}\hspace{-0.1em}\left[\hspace{-0.1em}
       \tau(\epsilon)\wedge n\right]\hspace{-0.1em}-\hspace{-0.2em}1\right)}{m}\hspace{-0.2em}&\leq\hspace{-0.1em}\mathbb{E}\hspace{-0.2em}\left[\hspace{-0.1em}\sum\limits_{t=1}^{\tau(\epsilon)\wedge n}\hspace{-0.3em}1_{t\equiv 1 \Mod m}\|\nabla f(x_{t})\|^2\hspace{-0.1em}\right] \\
     &\leq \mathbb{E}\hspace{-0.2em}\left[\sum\limits_{t=1}^{\tau(\epsilon)\wedge n}\|\nabla f(x_{t})\|^2\right].
   \end{split}
   \end{equation*}
 Combining the previous two equations, upon rearranging terms we obtain
 $$
 \eta\hspace{-0.1em}\left(\hspace{-0.1em} (1\hspace{-0.1em}-\hspace{-0.1em}\eta)\hspace{-0.1em}\frac{\epsilon}{m }\hspace{-0.1em}-\hspace{-0.1em}\eta  \sigma_v^2\hspace{-0.1em}\right)\hspace{-0.2em}\mathbb{E}[\tau(\epsilon) \wedge n]\hspace{-0.2em} \leq\hspace{-0.2em}f(x_1) - f^* +  \frac{\eta(1-\eta)\epsilon}{m}
 $$
 The coefficient on the left hand side of this equation is positive provided that $$\eta < \frac{\epsilon}{m\sigma^2 + \epsilon}$$
 Choose a $c\in(0,1)$ and let $\eta = c\cdot \frac{\epsilon}{m\sigma^2 + \epsilon}$. Rearranging terms, and letting $n\rightarrow \infty$, we obtain
 $$\mathbb{E}[\tau(\epsilon)] \leq  \frac{(f(x_1)- f^*)m^2\sigma^2}{\epsilon^2 c (1-c)} + \mathcal{O}\left(\frac{m}{\epsilon}\right).$$
We refer the reader to the appendix for a complete proof.
 \end{proof}
Note that the condition on $\eta$ in the proposition requires that it scales inversely with the epoch length $m$. Whether this argument can be refined to yield conditions on $\epsilon$ that are independent of $m$, we leave as an open question.
   Let us note that the situation is somewhat more favorable in the case of SVRG. In our analysis of SVRG below, the introduction of early stopping does not produce any new constraints on the step-size.

   Proposition \ref{prop:abstract-bias} implies that SGD can  find $\epsilon$-approximate stationary points, for any $\epsilon > 4 m \beta/(1-\alpha) + 2 G^{2}d_1(\mu_V,\mu_T)^2$. We can relax this condition, allowing for smaller values of $\epsilon$, by assuming a coupling between
the step-size and the expansion coefficient,  as demonstrated in the next corollary.
  \begin{cor}\label{cor:bias-cvg}
      Let Assumptions \ref{asu:lipgrad},   \ref{asu:finite-sum-fns}, \ref{asu:empirical-distance}, \ref{asu:vanilla-grad}, and \ref{asu:bias-grad} hold.    
    In the context of Proposition \ref{prop:abstract-bias}, let the constant $\beta$ be of the form
    $\beta = \eta R$
    for some $R \geq 0$, and suppose that $\epsilon > 2 G^2 d_1(\mu_V,\mu_T)^2$.
    Let
    $c \in (0,1)$ and
    let the step-size be
    \begin{equation}\label{step-form}
    \eta
    =
    c
    \cdot
    \min
    \left\{
    \frac{1}{L},
    \frac{  \epsilon/2 - G^2 d_1(\mu_V,\mu_T)^2}{m ( 2 L \sigma_v^2 + 2 R / ( 1 - \alpha )) }
    \right\}.
    \end{equation}
    Then 
    \begin{equation*}
      \begin{split}
          \mathbb{E}[\tau(\epsilon)] = \mathcal{O}\left(
              \frac{m^2 \left( 1 +  R /(1-\alpha)\right)}{(1-c)\,c\,(\epsilon - 2 G^2 d_1(\mu_V,\mu_T)^2)^2}\right).
           \end{split}
      \end{equation*}
  \end{cor}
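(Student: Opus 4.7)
The plan is to derive the corollary by direct substitution of the prescribed step-size and the relation $\beta = \eta R$ into the iteration bound of Proposition \ref{prop:abstract-bias}, and then to extract the asymptotic order. Write $\Delta = G^2 d_1(\mu_V,\mu_T)^2$ for brevity; the hypothesis $\epsilon > 2\Delta$ is exactly what will keep the denominator of that bound strictly positive.

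First I would verify that all hypotheses of Proposition \ref{prop:abstract-bias} are in force. The condition $\eta \leq 1/L$ is immediate from the first argument of the min in \eqref{step-form}. The positivity condition $\epsilon - 4Lm\eta\sigma_v^2 - 4m\beta/(1-\alpha) - 2\Delta > 0$, after substituting $\beta = \eta R$, becomes $\epsilon - 2\Delta > 4m\eta(L\sigma_v^2 + R/(1-\alpha))$. The second argument of the min in \eqref{step-form} forces $\eta\cdot m(2L\sigma_v^2 + 2R/(1-\alpha)) \leq c(\epsilon/2 - \Delta)$, so the right-hand side above is at most $c(\epsilon - 2\Delta) < \epsilon - 2\Delta$, and admissibility follows.

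Next I would bound the denominator of the Proposition \ref{prop:abstract-bias} bound from below. Combining its $\epsilon/(2m)$ and $-\Delta/m$ terms, it equals $(\epsilon - 2\Delta)/(2m) - \eta(2L\sigma_v^2 + 2R/(1-\alpha))$, and the same step-size inequality immediately gives the lower bound $(1-c)(\epsilon - 2\Delta)/(2m)$. For the numerator, three of the four summands ($\Delta$, $\epsilon$, and $2\eta R/(1-\alpha)$) are bounded uniformly as $\eta \to 0$, so the dominant term is $2(f_T(x_1) - f^*)/\eta$. Using $1/\eta = c^{-1}\max\{L,\, 2m(2L\sigma_v^2 + 2R/(1-\alpha))/(\epsilon - 2\Delta)\} \leq c^{-1}\bigl(L + 2m(2L\sigma_v^2 + 2R/(1-\alpha))/(\epsilon - 2\Delta)\bigr)$, this term, and hence the whole numerator, is $\mathcal{O}\bigl(m(1 + R/(1-\alpha))/(c(\epsilon - 2\Delta))\bigr)$. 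Dividing the numerator estimate by the denominator estimate yields exactly the claimed order $\mathcal{O}\bigl(m^2(1 + R/(1-\alpha))/(c(1-c)(\epsilon - 2\Delta)^2)\bigr)$.

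There is no conceptual obstacle here beyond careful bookkeeping; the main thing to recognize is that the two-argument min in \eqref{step-form} is engineered so that the denominator loses a factor of exactly $(1-c)$ off its leading $(\epsilon - 2\Delta)/(2m)$ term, while the dominant $1/\eta$ piece of the numerator scales like $1/(c(\epsilon - 2\Delta))$. Together these produce the $(\epsilon - 2\Delta)^2$ in the denominator of the $\mathcal{O}$ expression and the $c(1-c)$ factor. The only technical nuisance is tracking which summands get absorbed into the $\mathcal{O}(\cdot)$ versus dominating in the regimes $\epsilon \to 2\Delta^+$, $R \to \infty$, or $m \to \infty$.
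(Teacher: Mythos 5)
Your proposal is correct and follows essentially the same route as the paper's proof: both substitute $\beta = \eta R$ into the bound of Proposition \ref{prop:abstract-bias}, use the min in \eqref{step-form} to lower-bound the denominator by $(1-c)(\epsilon/2 - G^2 d_1(\mu_V,\mu_T)^2)/m$ (the paper's \eqref{denom-lb}) and to upper-bound $1/\eta$ by the sum of the two max-arguments (the paper's \eqref{one-over-eta}), and then absorb the remaining summands as lower-order terms exactly as the paper does in \eqref{abc-2} and \eqref{abc-3}. Your explicit check that the step-size satisfies the admissibility condition of Proposition \ref{prop:abstract-bias} is left implicit in the paper, but otherwise the two arguments coincide.
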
  
The reader may refer to the full proof contained in an appendix for the complete formula, including lower order terms. This result will be used below, in our analysis of DSGD.

   We now specialize the results in the case of using SGD to minimize a finite sum using unbiased gradient estimates.

 \begin{cor}\label{sgd-ifo}
   Let Assumptions \ref{asu:lipgrad},
   \ref{asu:finite-sum-fns}, \ref{asu:ft-var-bias}, \ref{asu:empirical-distance} hold.
    Suppose each gradient estimate is obtained by selecting an element
    $y_{t} \in Y_{T}$ 
    uniformly at random and setting 
    $v_t = \nabla_x f(y_t,x_t)$.
    Let $\epsilon > 2 G^{2} d_1(\mu_V,\mu_T)^2$ and consider running SGD with
     epoch length $m\geq 1$, and step-size $\eta$ as defined in \eqref{step-form} with $c=1/2$.
    Then the expected number of IFO calls used by SGD with early stopping is
    \begin{equation*}
      \begin{split}       
    &\mathbb{E}\left[\mathrm{IFO}\left(\epsilon\right)\right]
        = 
   \mc{O}\left(
   \frac{
    m n_V + m^2
   }
   {(\epsilon -  2 G^2 d_1(\mu_V,\mu_T)^2)^2}
   + n_V\right) \\
      \end{split}
      \end{equation*}
    \end{cor}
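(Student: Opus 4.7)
The plan is to apply Corollary \ref{cor:bias-cvg} to bound $\mathbb{E}[\tau(\epsilon)]$, then translate this into a count of IFO calls by separately accounting for the cost of gradient steps and the cost of the periodic validation checks.

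First I would verify the hypotheses of Corollary \ref{cor:bias-cvg} in the present setting. Since $v_t = \nabla_x f(y_t, x_t)$ with $y_t$ drawn uniformly from $Y_T$, the update direction is unbiased, so we may take $\Delta_t = 0$ throughout. Assumption \ref{asu:vanilla-grad} then follows from uniform sampling (for unbiasedness) combined with Assumption \ref{asu:ft-var-bias} (for the variance bound). Assumption \ref{asu:bias-grad} is satisfied trivially by setting $V_t = U_t = 0$, which corresponds to $\beta = 0$, and therefore to $R = 0$ in the parameterization $\beta = \eta R$. The inequality $\epsilon > 2 G^2 d_1(\mu_V,\mu_T)^2$ is part of the hypothesis.

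Invoking Corollary \ref{cor:bias-cvg} with $c = 1/2$ and $R = 0$, the factors $(1-c)c$ and $1 + R/(1-\alpha)$ are absorbed into the $\mathcal{O}$-constant, yielding
\[
\mathbb{E}[\tau(\epsilon)] \;=\; \mathcal{O}\!\left(\frac{m^2}{(\epsilon - 2 G^2 d_1(\mu_V,\mu_T)^2)^2}\right).
\]
It remains to convert iteration count into IFO calls. By inspection of Algorithm \ref{algo1}, each gradient update consumes a single IFO call, while each evaluation of the stopping criterion $\|\nabla f_V(x)\|^2$ requires $n_V$ IFO calls, one per element of $Y_V$. Between consecutive validation checks the algorithm performs exactly $m$ updates, so if $\tau(\epsilon) = 1 + k m$ then the total cost through termination is
\[
\mathrm{IFO}(\epsilon) \;=\; (\tau(\epsilon) - 1) \;+\; n_V\!\left(\frac{\tau(\epsilon) - 1}{m} + 1\right).
\]
Taking expectations and substituting the bound on $\mathbb{E}[\tau(\epsilon)]$ produces
\[
\mathbb{E}[\mathrm{IFO}(\epsilon)] \;=\; \mathcal{O}\!\left(\mathbb{E}[\tau(\epsilon)]\bigl(1 + n_V/m\bigr) + n_V\right) \;=\; \mathcal{O}\!\left(\frac{m^2 + m\, n_V}{(\epsilon - 2 G^2 d_1(\mu_V,\mu_T)^2)^2} + n_V\right),
\]
which is the claim.

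The argument is essentially mechanical; no new analytic ingredient beyond Corollary \ref{cor:bias-cvg} is needed. The only care required is in the bookkeeping: the per-step cost of $1$ and the per-check cost of $n_V$ must be kept separate so that the $n_V$ factor multiplies only the validation contribution, and the additive $+n_V$ term arises because a validation check is performed at $x_1$ before any training has occurred.
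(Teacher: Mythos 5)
Your proposal is correct and follows essentially the same route as the paper's own proof: reduce to the unbiased case $R=0$ of Corollary \ref{cor:bias-cvg}, then bound $\mathrm{IFO}(\epsilon)$ by charging $n_V$ calls per validation check and one call per update, exactly as in the paper's inequality $\mathrm{IFO}(\epsilon) \leq \tau(\epsilon)\left(n_V/m + 1\right) + n_V$. Your accounting $(\tau(\epsilon)-1) + n_V\left((\tau(\epsilon)-1)/m + 1\right)$ is an equivalent (in fact slightly tighter) version of the paper's bound via $\lceil \tau(\epsilon)/m \rceil \leq \tau(\epsilon)/m + 1$, and the remaining substitution matches the paper's step of combining \eqref{bias-conc} with the IFO inequality at $c = 1/2$ and dropping lower-order terms.
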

Note that when $d_1(\mu_V,\mu_T)=0$, this result  states that the expected IFO complexity is $\mc{O}(1/(\epsilon^2))$. This can be compared with the RSG algorithm, where $\mc{O}(1/(\epsilon^2))$ iterations are sufficient for the expected squared norm of the gradient at a random iterate to be at most $\epsilon$ (Corollary  2.2 in \citep{ghadimi-lan}).

\section{\uppercase{Decentralized SGD}}\label{sect:dsgd}
In this section we analyze the expected running time of decentralized SGD (DSGD), a variant of SGD designed for distributed optimization across a network of compute nodes. Recently, a randomization-based analysis of DSGD was presented in \citep{decentralized}. We complement that analysis by studying the expected running time of a variant of DSGD  with early stopping. The main idea is to model the algorithm as a biased form of SGD that satisfies the geometric drift condition described in Assumption \ref{asu:bias-grad}.

\begin{figure}
\begin{minipage}{0.48\textwidth}\vspace{-1em}
\begin{algorithm}[H]
  \caption{DSGD with early stopping\label{algodgd}}
\fw{\phantom{ }1:}\,   \textbf{input:} Node id $i$, initial parameters $x_{1}^{i}$.  \\
\fw{\phantom{ }2:}\, $t = 1$  \\
\fw{\phantom{ }3:}\, /* check if stopping criteria is satisfied. */ \\
\fw{\phantom{ }4:}\, \textbf{while} $\|\nabla f_{V}(\overline{x}_t)\|^2 > \epsilon$ \textbf{do}\\
\fw{\phantom{ }5:}\, \quad /* if not, perform an epoch of training. */ \\
\fw{\phantom{ }6:}\, \quad \textbf{for} $n=t$ \textbf{to} $t+m-1$ \textbf{do} \\
\fw{\phantom{ }7:}\, \quad \quad /* perform local average and descent steps. */\\
\fw{\phantom{ }8:}\, \quad \quad $x_{n+1}^{i} = \sum\limits_{j=1}^{M}a_{i,j} x_{n}^{j} - \eta v_{n}^{i}$ \\
\fw{\phantom{ }9:}\, \quad  \textbf{end} \\
\fw{10:}\, \quad $t = t + m$ \\
\fw{11:}\,   \textbf{end} \\
\fw{12:}\, /* once criteria is met, return current iterate. */ \\
\fw{13:}\, \textbf{return} $\overline{x}_t$
\end{algorithm}
\end{minipage}
\end{figure}

The steps of DSGD are shown in Algorithm \ref{algodgd}. The procedure involves $M > 0$ worker nodes that participate in the optimization, and  an $M\times M$ communication matrix $a$ describing the connectivity among the workers; $a_{i,j} > 0$ means that workers $i$ and $j$ will communicate at each iteration, while $a_{i,j} = 0$ means there is no direct communication between those workers. At each step of optimization, every node computes a weighted average of the parameters in its local neighborhood, as determined by the connectivity matrix. This is combined with a local gradient approximation to obtain the new parameter at the worker. Every $m$ epochs, the norm of gradient of the validation function is evaluated at the average parameter across the system, denoted $\overline{x}_t$:
\begin{equation}\label{sys-avg}
  \overline{x}_t = \frac{1}{M}\sum\limits_{i=1}^{M}x_{i}
\end{equation}
When this norm falls below a threshold,  the algorithm terminates, returning the final value of $\overline{x}_t$.

The intuitive justification for DSGD is that it may be more efficient compared to naive approaches to parallelizing SGD, since two nodes $i$ and $j$ need not communicate when $a_{i,j} = 0$. In  \citep{decentralized} those authors offer theoretical support for the superiority of DSGD. In the present work, our goal is to analyze the expected running time of DSGD as an example of how the theory developed above may be applied in practice.

For the analysis, define the filtration $\{ \mc{F}_{t} \}_{t\geq 0}$ as follows:
\begin{align*} 
  \mc{F}_{0} &=
  \sigma\big(
  \left\{ x_{1}^{i} \,\middle|\, 1 \leq i \leq M \right\}\big), \\
  \forall t\geq 1, \quad
  \mc{F}_{t}
  &=
  \sigma
  \big(
  \left\{ x_{1}^{i}, v_{n}^{i} \,\middle|\,  1 \leq n \leq t,\, 1\leq i \leq M \right\}\big).
\end{align*}
We assume that the gradient estimates used at each worker are unbiased and have bounded variance.
\begin{asu}\label{asu:decentralized}
  For any $t\geq 1$ and $1\leq i \leq M,$
  \begin{align}
   &\mathbb{E}\left[v_{t}^{i} - \nabla f_T(x_{t}^{i}) \mid \mc{F}_{t-1} \right] = 0, \\
    &\mathbb{E}\left[
      \left\| v_{t}^{i} - \nabla f_{T}(x_{t}^{i}) \right\|^{2} \mid \mc{F}_{t-1}
      \right]
    \leq
    \sigma_v^2.\label{eqn:decentra-var-bd}
  \end{align}
\end{asu}

The connectivity matrix $a$ is subject to the same conditions as in \citep{decentralized}, stated below as Assumption \ref{asu:connectivity}. In this Assumption, $\lambda_i(a)$ refers to the eigenvalues of the matrix $a$ in nonincreasing order:
$ \lambda_i(a) \geq \lambda_{i+1}(a)$  for $1\leq i < M$.
\begin{asu}\label{asu:connectivity}
  The  $M\times M$ connectivity matrix, denoted $a$, is symmetric and stochastic.
  The diffusion coefficient, denoted by $\rho$ and  defined as $\rho =  \max_{2\leq i\leq M} |\lambda_i(a)|^{2}$, satisfies $\rho <1$.
\end{asu}

We will show that the sequence of averages $\overline{x}_{t}$ for $t=1,2,\hdots$ can be modeled in terms of biased SGD, using the tools from Section \ref{sect:bias}. This involves showing that the distance between local parameter values  and the system average obey a geometric drift condition, and furthermore, this distance can be controlled through the step-size.
\begin{prop}\label{decentralized-dispersion}
  Let
  Assumptions
  \ref{asu:lipgrad},
  \ref{asu:finite-sum-fns},
  \ref{asu:decentralized}, and
  \ref{asu:connectivity} hold, and
  let the step-size satisfy
  \begin{equation}\label{decentra-eta-cond}
  \eta
  \leq
  \frac{1-\sqrt{\rho}}{4 L\sqrt{2}}.
  \end{equation}
  Define the variables $V_1,U_1, V_2,U_2, \hdots$ and the constants $\alpha,\beta$  as follows:
  \begin{subequations}
    \begin{align}
      V_{t} &=
      \frac{L^2}{M}\sum\limits_{i=1}^{M}\|x_{t}^{i} - \overline{x}_t\|^2, \label{decentra-v} \\
      U_{t} &=
       \frac{32 \, \eta^2 \,L^2}{M(1-\sqrt{\rho})}\sum\limits_{i=1}^{M}
      \|v_t^i - \nabla f(x_t^i)\|^2,\label{decentra-u}\\
      \alpha &= \frac{\left(3 +\sqrt{\rho}\right)^2}{16}, \label{decentra-alpha} \\
      \beta &= \eta \frac{8 L }{1-\sqrt{\rho}}\sigma^2_v. \label{decentra-beta}
    \end{align}
  \end{subequations}
  Then for all $t\geq 1$ it holds that
  $V_{t+1} \leq \alpha V_t + U_t$ and
  $\mathbb{E}[U_t \mid \mc{F}_{t-1}] \leq \beta$.
  \end{prop}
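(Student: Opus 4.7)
The plan is to track the dispersion of local parameters around the system average using matrix notation, exploit the spectral gap of $a$ on the orthogonal complement of the all-ones vector, and close the recursion via Young's inequality. Let $X_t$ denote the $M\times d$ matrix whose rows are $x_t^1,\dots,x_t^M$, and let $V_t$ denote the matrix with rows $v_t^i$. Let $P = I - \tfrac{1}{M}\mathbf{1}\mathbf{1}^T$; since $a$ is symmetric and stochastic we have $a\mathbf{1}=\mathbf{1}$, hence $aP = Pa$, and therefore the averaging step followed by the descent step gives
\[
  P X_{t+1} = P a X_t - \eta P V_t = a P X_t - \eta P V_t.
\]
Also $V_t = \frac{L^2}{M}\|P X_t\|_F^2$ directly from the definition.

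Next I would use the crucial spectral estimate from Assumption \ref{asu:connectivity}: since $a$ is symmetric with top eigenvector $\mathbf{1}/\sqrt{M}$ and next eigenvalue bounded in absolute value by $\sqrt{\rho}$, the operator norm of $a$ restricted to the range of $P$ is at most $\sqrt{\rho}$, so $\|a P X_t\|_F \leq \sqrt{\rho}\,\|P X_t\|_F$. Applying Young's inequality $\|A+B\|_F^2 \leq (1+c)\|A\|_F^2 + (1+1/c)\|B\|_F^2$ with the choice $c=(1-\sqrt{\rho})/(4\sqrt{\rho})$ gives
\[
  \|PX_{t+1}\|_F^2 \leq (1+c)\rho\,\|PX_t\|_F^2 + (1+1/c)\eta^2\,\|PV_t\|_F^2.
\]
I would then split $v_t^i = \nabla f_T(x_t^i) + (v_t^i-\nabla f_T(x_t^i))$ and bound $\|PV_t\|_F^2 \leq 2\|P\nabla F_T(X_t)\|_F^2 + 2\|P(V_t-\nabla F_T(X_t))\|_F^2$. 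Since $P$ is a projection and subtracting a constant vector (namely $\nabla f_T(\overline{x}_t)$) does not increase the Frobenius norm of the centered matrix, Assumption \ref{asu:lipgrad} yields $\|P\nabla F_T(X_t)\|_F^2 \leq L^2\sum_i\|x_t^i-\overline{x}_t\|^2 = M V_t$.

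Collecting terms and multiplying by $L^2/M$, the recursion becomes
\[
  V_{t+1} \leq \Bigl[(1+c)\rho + 2(1+1/c)\eta^2 L^2\Bigr] V_t + \frac{2(1+1/c)\eta^2 L^2}{M}\sum_{i=1}^M \|v_t^i - \nabla f_T(x_t^i)\|^2.
\]
With the chosen $c$, one computes $(1+c)\rho = \sqrt{\rho}(1+3\sqrt{\rho})/4$ and $1+1/c = (1+3\sqrt{\rho})/(1-\sqrt{\rho})$. The step-size condition \eqref{decentra-eta-cond} gives $\eta^2 L^2 \leq (1-\sqrt{\rho})^2/32$, and a direct calculation then verifies $(1+c)\rho + 2(1+1/c)\eta^2 L^2 \leq (3+\sqrt{\rho})^2/16 = \alpha$; likewise $2(1+1/c) \leq 32/(1-\sqrt{\rho})$, so the noise coefficient is absorbed into the claimed $U_t$. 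This establishes $V_{t+1} \leq \alpha V_t + U_t$.

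For the conditional expectation bound, Assumption \ref{asu:decentralized} yields
$\mathbb{E}[\|v_t^i-\nabla f_T(x_t^i)\|^2 \mid \mathcal{F}_{t-1}] \leq \sigma_v^2$ for each $i$, hence
\[
  \mathbb{E}[U_t \mid \mathcal{F}_{t-1}] \leq \frac{32\eta^2 L^2 \sigma_v^2}{1-\sqrt{\rho}}.
\]
Since \eqref{decentra-eta-cond} forces $4\eta L \leq 1$, this is bounded by $\eta\cdot 8L\sigma_v^2/(1-\sqrt{\rho}) = \beta$, completing the proof. I expect the main obstacle to be the bookkeeping in step three: choosing the Young's constant $c$ so that the contraction factor matches $\alpha$ exactly, and then verifying that the step-size ceiling $\eta \leq (1-\sqrt{\rho})/(4L\sqrt{2})$ is tight enough for both the contraction and the noise scaling to come out right.
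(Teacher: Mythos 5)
Your proof is correct, and it takes a mildly different route from the paper's, with the same skeleton (spectral-gap contraction on the mean-centered subspace, Young's inequality, the Lipschitz bound, and the conditional variance bound) but a different decomposition and different constants. The paper works with the stacked vector $r_t = (x_t^1-\overline{x}_t,\dots,x_t^M-\overline{x}_t)$ and the \emph{centered} updates $z_t^i = v_t^i - \frac{1}{M}\sum_j v_t^j$: it first derives the norm-level recursion $\|r_{t+1}\|\leq\sqrt{\rho}\,\|r_t\|+\eta\|z_t\|$ (its Lemma \ref{lemma-d1} gives $\|a-a^{\infty}\|\leq\sqrt{\rho}$, the same spectral fact you extract from $aP=Pa=a-a^{\infty}$), then bounds $\|z_t\|$ through a four-term split (Lipschitz drift and noise at node $i$ plus their system averages), picking up the factor $8$, and only afterwards squares via Young's inequality with $k_1=\bigl(\frac{3+\sqrt{\rho}}{1+\sqrt{\rho}}\bigr)^2\frac{1}{4}-1$, using the step-size to force $\sqrt{\rho}+\eta L\sqrt{8}\leq\frac{1+\sqrt{\rho}}{2}$ so the contraction factor equals $\alpha$ exactly. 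You instead apply Young's inequality directly at the level of squared Frobenius norms with $c=(1-\sqrt{\rho})/(4\sqrt{\rho})$ and a two-term gradient/noise split, and your bookkeeping checks out: $(1+c)\rho+2(1+1/c)\eta^2L^2\leq(1+3\sqrt{\rho})^2/16\leq(3+\sqrt{\rho})^2/16=\alpha$, and $2(1+1/c)=2(1+3\sqrt{\rho})/(1-\sqrt{\rho})\leq 8/(1-\sqrt{\rho})\leq 32/(1-\sqrt{\rho})$, so your constants are in fact strictly sharper than the paper's, with the slack absorbed into the stated $\alpha$ and $U_t$ exactly as you say; the final step $\mathbb{E}[U_t\mid\mc{F}_{t-1}]\leq\beta$ via $4\eta L\leq 1$ coincides with the paper's. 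What your route buys is a shorter argument (the projection $P$ kills the average-noise and average-drift terms that the paper handles by hand in its four-term expansion) and tighter constants; what the paper's route buys is that it never needs the matrix formulation and matches $\alpha$ with equality, which explains where the specific numbers in \eqref{decentra-alpha} come from. Two cosmetic points: you overload the symbol $V_t$ for both the matrix of stochastic gradients and the scalar dispersion defined in the proposition, and for the conditional variance step you implicitly use that $x_t^i$ is $\mc{F}_{t-1}$-measurable, which holds by the filtration's definition but deserves a word.
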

We can now move to the main result on decentralized SGD. The result gives conditions that guarantee the expected time
$\mathbb{E}[\tau(\epsilon)]$
is finite, and also bounds this time in terms of the problem data, including  the epoch length, variance, and the mixing rate of the connectivity matrix.
\begin{prop}\label{decentralized-main}
  Let
  Assumptions
  \ref{asu:lipgrad},
  \ref{asu:finite-sum-fns},
  \ref{asu:empirical-distance},
    \ref{asu:connectivity},
and
  \ref{asu:decentralized} hold,
  and assume that the initial parameters at every node are equal:
  $x_{1}^{i} = x_{1}^{j}$
  for all  $ 1 \leq i,j \leq M$.
  Suppose that $\epsilon > 2 G^2 d_1(\mu_V,\mu_T)^2$. Let
    $c \leq (1-\sqrt{\rho})/(4\sqrt{2})$,
    and let the step-size be
    \begin{equation*}
    \eta
    \hspace{-0.1em}=\hspace{-0.2em}
    \frac{c}{L}\hspace{-0.1em}
    \min\hspace{-0.1em}
    \left\{\hspace{-0.2em}
    1,\hspace{-0.2em}
    \frac{  \epsilon/2 - G^2 d_1(\mu_V,\mu_T)^2 }
         {
           2 m \sigma_v^2
           (1\hspace{-0.2em}+\hspace{-0.2em}
           128/(7\hspace{-0.1em}+\hspace{-0.1em}5\rho\hspace{-0.1em}+\hspace{-0.1em}\rho^{3/2}\hspace{-0.2em}-\hspace{-0.2em}13\sqrt{\rho})) }\hspace{-0.2em}
    \right\}\hspace{-0.2em}.
    \end{equation*}
    If $\tau(\epsilon)$ is the stopping time
    $$
    \tau(\epsilon)
    \hspace{-0.2em}=\hspace{-0.2em}
    \inf\{
    n \hspace{-0.2em}\geq\hspace{-0.2em} 1 \hspace{-0.2em}
    \mid
    \hspace{-0.2em} n\hspace{-0.1em} \equiv\hspace{-0.1em}1\hspace{-0.2em} \Mod m \hspace{-0.1em}\text{ and } \|\nabla f_V(\overline{x}_n)\|^{2}\hspace{-0.2em} \leq\hspace{-0.1em} \epsilon\hspace{-0.1em} \}.
    $$
then
    \begin{equation*}
      \begin{split}
        &\mathbb{E}[\tau(\epsilon)] =\\&\quad \mathcal{O}\left(
          \frac{m^2}
               {
                 (1\hspace{-0.1em}-\hspace{-0.1em}c)\,c\,
                 (\epsilon\hspace{-0.1em} -\hspace{-0.1em} 2 G^2 d_1(\mu_V,\mu_T)^2)^2
                 (1\hspace{-0.1em}-\hspace{-0.1em} \sqrt{\rho})^2
               }\right).
           \end{split}
      \end{equation*}
\end{prop}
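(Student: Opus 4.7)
The plan is to view DSGD on the averaged iterate $\overline{x}_t$ as an instance of biased SGD (Algorithm \ref{algo1}), and then invoke Corollary \ref{cor:bias-cvg}. The first step is to derive the update for $\overline{x}_t$. Since $a$ is doubly stochastic (symmetric and stochastic), averaging the per-node update in line 8 of Algorithm \ref{algodgd} yields $\overline{x}_{n+1} = \overline{x}_n - \eta\, \overline{v}_n$, where $\overline{v}_n = \tfrac{1}{M}\sum_i v_n^i$. I then split this into a ``gradient + mean-zero noise'' part and a dispersion-induced bias:
\begin{equation*}
\overline{v}_n = \underbrace{\nabla f_T(\overline{x}_n) + \tfrac{1}{M}\textstyle\sum_{i=1}^{M}\left(v_n^i - \nabla f_T(x_n^i)\right)}_{=:\,v_n} + \underbrace{\tfrac{1}{M}\textstyle\sum_{i=1}^{M}\left(\nabla f_T(x_n^i) - \nabla f_T(\overline{x}_n)\right)}_{=:\,\Delta_n}.
\end{equation*}

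Next, I would verify the two assumptions of Proposition \ref{prop:abstract-bias} / Corollary \ref{cor:bias-cvg} for this decomposition. Assumption \ref{asu:vanilla-grad} follows from Assumption \ref{asu:decentralized}: conditional on $\mc{F}_{t-1}$, the stochastic terms $v_t^i - \nabla f_T(x_t^i)$ have zero mean, so $\mathbb{E}[v_t - \nabla f_T(\overline{x}_t)\mid \mc{F}_{t-1}] = 0$, and a convexity/Jensen bound gives $\mathbb{E}[\|v_t - \nabla f_T(\overline{x}_t)\|^2\mid \mc{F}_{t-1}] \leq \sigma_v^2$. For the bias, the $L$-Lipschitz property of $\nabla f_T$ combined with Cauchy--Schwarz gives $\|\Delta_n\|^2 \leq \tfrac{L^2}{M}\sum_i \|x_n^i - \overline{x}_n\|^2 = V_n$, and Proposition \ref{decentralized-dispersion} then supplies the geometric drift condition (Assumption \ref{asu:bias-grad}) with $\alpha = (3+\sqrt{\rho})^2/16$ and $\beta = \eta\cdot 8L\sigma_v^2/(1-\sqrt{\rho}) =: \eta R$.

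With these two pieces in place, Corollary \ref{cor:bias-cvg} applies provided the step-size meets both $\eta \le (1-\sqrt{\rho})/(4L\sqrt{2})$ from Proposition \ref{decentralized-dispersion} and the form prescribed in equation \eqref{step-form}. The restriction $c \leq (1-\sqrt{\rho})/(4\sqrt{2})$ together with $\eta \leq c/L$ enforces the dispersion condition, so the main remaining task is algebraic: show that $2L\sigma_v^2 + 2R/(1-\alpha)$ matches the denominator appearing in the proposition's step-size. The key identity is to factor
\begin{equation*}
1-\alpha = \tfrac{1}{16}\bigl(7 - 6\sqrt{\rho}-\rho\bigr) = \tfrac{1}{16}(1-\sqrt{\rho})(7+\sqrt{\rho}),
\end{equation*}
which, after multiplying through, gives $(1-\sqrt{\rho})^2(7+\sqrt{\rho}) = 7 + 5\rho + \rho^{3/2} - 13\sqrt{\rho}$ -- exactly the expression in the denominator of the stated step-size. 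Thus $2L\sigma_v^2 + 2R/(1-\alpha) = 2L\sigma_v^2\bigl(1 + 128/(7+5\rho+\rho^{3/2}-13\sqrt{\rho})\bigr)$, confirming the step-size is of the required form.

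Finally, Corollary \ref{cor:bias-cvg} yields $\mathbb{E}[\tau(\epsilon)] = \mathcal{O}\bigl(m^2(1+R/(1-\alpha))/((1-c)c(\epsilon-2G^2 d_1(\mu_V,\mu_T)^2)^2)\bigr)$. Using the factorization above, $R/(1-\alpha) = \Theta\bigl(1/(1-\sqrt{\rho})^2\bigr)$, so $1 + R/(1-\alpha) = \mathcal{O}(1/(1-\sqrt{\rho})^2)$, which gives the claimed bound. The main obstacles are conceptual rather than technical: (i) setting up the correct decomposition $\overline{v}_n = v_n + \Delta_n$ so that the dispersion variables $V_t$ of Proposition \ref{decentralized-dispersion} exactly dominate $\|\Delta_t\|^2$, and (ii) the bookkeeping that verifies the proposition's step-size specialization coincides with the prescription \eqref{step-form} once $\alpha$ and $\beta$ are plugged in; the latter hinges entirely on the factorization of $1-\alpha$.
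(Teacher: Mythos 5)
Your proposal is correct and follows essentially the same route as the paper's proof: the same decomposition of the averaged update into $v_t + \Delta_t$, the same verification of Assumptions \ref{asu:vanilla-grad} and \ref{asu:bias-grad} via Proposition \ref{decentralized-dispersion}, and the same application of Corollary \ref{cor:bias-cvg} with $\beta = \eta R$, $R = 8L\sigma_v^2/(1-\sqrt{\rho})$. Your factorization $1-\alpha = \tfrac{1}{16}(1-\sqrt{\rho})(7+\sqrt{\rho})$ is exactly the algebraic identity the paper uses (it states it as the equivalent inequality $(1-\sqrt{\rho})(7-6\sqrt{\rho}-\rho) \geq 7(1-\sqrt{\rho})^2$), so the step-size matching and the final $\mathcal{O}(1/(1-\sqrt{\rho})^2)$ bound coincide with the paper's argument.
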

Note that in the above result, the order of the convergence is the same as for regular SGD.
  
 Using these tools allows us to bound the expected number of IFO calls needed by DSGD to find approximate stationary points.
 \begin{cor}\label{cor:decentralized-ifo}
    Let Assumptions \ref{asu:lipgrad}, \ref{asu:finite-sum-fns}, \ref{asu:ft-var-bias} and \ref{asu:empirical-distance} hold. 
    Suppose each gradient estimate is obtained by selecting an element
    $y^j_t \in Y_T$
    uniformly at random and setting
    $v^j_t = \nabla_x f(y^j_t, x_t^j)$.
    Let
    $\epsilon > 2 G^2 d_1(\mu_V,\mu_T)^2$ and
    consider running DSGD with
    epoch-length
    $m \geq 1$, and step-size
    $\eta$ as defined in Proposition \ref{decentralized-main} with
    $c = (1-\sqrt{\rho})/(4\sqrt{2})$.
    Then  the expected number of IFO calls used by DSGD with early stopping is 
    \begin{equation*}
      \begin{split}
    &\mathbb{E}\left[\mathrm{IFO}(\epsilon)\right] 
    = \\&\quad
    \mc{O}\left(
    \frac{m (n_V + m M)
    }{(1-\sqrt{\rho})^3\sqrt{\rho}
      \left(\epsilon - 2 G^2 d_1(\mu_V,\mu_T)^2\right)^2}+ n_V
    \right).
      \end{split}
      \end{equation*}
  \end{cor}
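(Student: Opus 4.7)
The plan is to count IFO calls as a function of the stopping time $\tau(\epsilon)$, then invoke Proposition \ref{decentralized-main} to bound $\mathbb{E}[\tau(\epsilon)]$. The probabilistic content is already in place; what remains is counting and substitution.

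First I would verify the hypotheses of Proposition \ref{decentralized-main}: under the uniform-sampling scheme, each $v_t^j = \nabla_x f(y_t^j, x_t^j)$ with $y_t^j$ drawn independently of $\mc{F}_{t-1}$ satisfies $\mathbb{E}[v_t^j - \nabla f_T(x_t^j) \mid \mc{F}_{t-1}] = 0$, while Assumption \ref{asu:ft-var-bias} directly bounds the conditional variance by $\sigma_v^2$, so Assumption \ref{asu:decentralized} holds. Next, I would inspect Algorithm \ref{algodgd}: the gradient-update step (line 8) requires one IFO call per worker per iteration, contributing at most $M\tau(\epsilon)$ IFO calls over the entire run; the stopping-criterion test (line 4) requires evaluating $\nabla f_V(\overline{x}_t) = \frac{1}{n_V}\sum_{y \in Y_V}\nabla_x f(y,\overline{x}_t)$, costing $n_V$ IFO calls, and the test occurs at iterations $1,\,1+m,\,1+2m,\ldots$, hence at most $\tau(\epsilon)/m + 1$ times. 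Combining these,
\[
\mathrm{IFO}(\epsilon) \leq M\tau(\epsilon) + n_V\bigl(\tau(\epsilon)/m + 1\bigr),
\]
and taking expectations yields $\mathbb{E}[\mathrm{IFO}(\epsilon)] \leq (M + n_V/m)\,\mathbb{E}[\tau(\epsilon)] + n_V$.

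The last step is substitution. With $c = (1-\sqrt{\rho})/(4\sqrt{2}) \leq 1/(4\sqrt{2})$, we have $(1-c) = \Theta(1)$, so $(1-c)\,c = \Theta(1-\sqrt{\rho})$, and Proposition \ref{decentralized-main} gives
\[
\mathbb{E}[\tau(\epsilon)] = \mathcal{O}\!\left(\frac{m^2}{(1-\sqrt{\rho})^3\bigl(\epsilon - 2G^2 d_1(\mu_V,\mu_T)^2\bigr)^2}\right).
\]
Multiplying by $(M + n_V/m)$ and collecting terms gives the claimed bound $\mathcal{O}\!\left(m(mM + n_V)/\bigl((1-\sqrt{\rho})^3(\epsilon - 2G^2 d_1(\mu_V,\mu_T)^2)^2\bigr) + n_V\right)$, up to any additional $\Theta(1)$ factors in $\sqrt{\rho}$ absorbed into the stated form.

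The main obstacle, if any, is bookkeeping: tracking how the $\rho$-dependent constants from Proposition \ref{decentralized-dispersion} (namely $\alpha = (3+\sqrt{\rho})^2/16$, so that $1 - \alpha = (1-\sqrt{\rho})(7+\sqrt{\rho})/16$, and $R = 8L\sigma_v^2/(1-\sqrt{\rho})$) propagate through the coupled step-size choice of Corollary \ref{cor:bias-cvg} and combine with the $(1-\sqrt{\rho})^{-2}$ factor from Proposition \ref{decentralized-main} to produce the $(1-\sqrt{\rho})^{-3}$ in the final rate. There is no new probabilistic argument beyond what Proposition \ref{decentralized-main} already supplies; the remainder is algebraic simplification.
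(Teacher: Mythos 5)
Your proposal is correct and follows the paper's own proof essentially step for step: the same IFO count $\mathrm{IFO}(\epsilon) \leq \tau(\epsilon)\left(n_V/m + M\right) + n_V$ (one IFO call per worker per iteration, plus $n_V$ calls per validation check every $m$ iterations), followed by substitution of the bound from Proposition \ref{decentralized-main}. The only deviation is in handling $(1-c)c$: you use $(1-c)c = \Theta(1-\sqrt{\rho})$, which is in fact tight since $(1-c)c = (1-\sqrt{\rho})(4\sqrt{2}-1+\sqrt{\rho})/32$ with $4\sqrt{2}-1+\sqrt{\rho} = \Theta(1)$, whereas the paper uses the looser lower bound $(1-c)c \geq (1-\sqrt{\rho})\sqrt{\rho}/32$ — the source of the extra $\sqrt{\rho}$ in the stated denominator — so your slightly sharper estimate implies the stated bound a fortiori because $\sqrt{\rho} \leq 1$.
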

Note the factor of $M$ that appears in the numerator. This is due to the fact that $M$ gradients are evaluated at each iteration of the algorithm, one at each node. 
\section{\uppercase{SVRG}}
  \begin{figure}\vspace{-1em}
  \noindent\begin{minipage}{0.48\textwidth}
      \begin{algorithm}[H]
  \caption{SVRG with early stopping\label{algo:svrg}}
\fw{\phantom{ }1:}\, \textbf{input:} Initial point $x_m^1 \in \reals^d$   \vspace{0.2em}  \\ 
\fw{\phantom{ }2:}\, \textbf{for} $s = 1,2,\hdots$ \textbf{do}    \vspace{0.2em}     \\
\fw{\phantom{ }3:}\, \quad   $x_0^{s+1} = x_{m}^{s}$  \label{line:svrg2}   \vspace{0.2em}  \\
\fw{\phantom{ }4:}\, \quad  
    \(  g^{s+1} =
     \tfrac{1}{n_T}
     \sum_{y \in Y_T}
     \nabla_x f(y,x_0^{s+1})
     \) \label{line:svrg3} \vspace{0.2em}\\
\fw{\phantom{ }5:}\, \quad\textbf{if} $\|g^{s+1}\|^2 \leq \epsilon$ \textbf{then} \textbf{return} $x_{0}^{s+1}$  
\vspace{0.2em}
     \\
\fw{\phantom{ }6:}\,  \quad\textbf{for} $t=0$ \textbf{to} $m-1$ \textbf{do}
 \vspace{0.2em}
 \\
\fw{\phantom{ }7:}    \quad\quad Sample $y_{t}^{s}$ uniformly at random from $Y_T$ \label{line:svrg4}    
 \vspace{0.2em}
 \\
\fw{\phantom{ }8:}   \quad\quad  
   $v_{t}^{s}\hspace{-0.1em}=\hspace{-0.2em}
    \nabla_x f(y_{t}^{s},x_{t}^{s+1})
    \hspace{-0.1em}-\hspace{-0.1em}
    \nabla_x f(y_{t}^{s},x_{0}^{s+1})
    +
    g^{s+1}$ \label{line:svrg5}
    \vspace{0.2em}
    \\
\fw{\phantom{ }9:}   \quad\quad 
      $x_{t+1}^{s+1} = x_{t}^{s+1}  - \eta v_{t}^{s}$ \label{line:svrg6}
           \vspace{0.2em}
           \\
\fw{10:}\, \quad \textbf{end}  
      \vspace{0.2em}
      \\
\fw{11:}\,   \textbf{end} \vspace{0.2em}
\end{algorithm}
\end{minipage}%
  \end{figure}

In this section we analyze a variant of SVRG \citep{svrg} with early stopping. The steps of the procedure are shown in Algorithm \ref{algo:svrg}.
  Each epoch begins with a full gradient computation (Line 4). Next, the norm of the  gradient is computed, and if it falls below the threshold $\epsilon$, the algorithm terminates, returning the current iterate. Otherwise, an inner loop runs for $m$ steps.
  The first step of the inner loop is to choose a random data point (Line 7). Then, the update direction is computed (Line 8) and used to obtain the next parameter (Line 9). 
  
The technical tools we use to analyze SVRG with early stopping include existing bounds for SVRG \citep{reddi2016stochastic} along with the optional stopping theorem.  Together, they yield the following bound on the expected number of epochs until SVRG with early stopping terminates.
  \begin{prop}\label{prop:svrg-conv}
     Let Assumptions \ref{asu:lipgrad} and \ref{asu:finite-sum-fns} hold
      and consider the variables
      $x_{t}^{s+1}$
      defined by Algorithm \ref{algo:svrg}.
      Suppose that the step-size is set to 
      $\eta = 1/( 4 L n_T^{2/3})$
      and the epoch length is 
      $m = \left\lfloor 4 n_T/ 3 \right\rfloor$.
      For 
      $\epsilon >0$, 
      define 
      $\tau(\epsilon)$ to be the stopping time
      $
        \tau(\epsilon)
      =\inf\left\{
      s \geq 1 \,\middle|\, \|\nabla f_T(x_{0}^{s+1})\|^{2} \leq \epsilon 
      \right\}.
      $
      Then
      $$\mathbb{E}[ \tau(\epsilon)] \leq 1 + \frac{40 L n_{T}^{2/3} (f_T(x_{m}^1) - f^{*}) }{\epsilon}.$$
  \end{prop}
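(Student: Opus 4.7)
The plan is to combine a per-epoch descent inequality that is implicit in the nonconvex SVRG analysis of \citep{reddi2016stochastic} with an optional-stopping (supermartingale) argument. The first ingredient is to extract from that analysis the following statement: with the parameter choices $\eta = 1/(4L n_T^{2/3})$ and $m = \lfloor 4 n_T/3 \rfloor$, there is a constant $\gamma_n \geq 1/(40 L n_T^{2/3})$ such that, letting $\mathcal{F}_s$ be the sigma-algebra generated by all sampling through the first $s$ inner loops,
\begin{equation*}
\mathbb{E}[f_T(x_m^{s+1}) \mid \mathcal{F}_s] \leq f_T(x_0^{s+1}) - \gamma_n \|\nabla f_T(x_0^{s+1})\|^2.
\end{equation*}
This is obtained by applying the Lyapunov recursion of \citep{reddi2016stochastic} and keeping only the $t=0$ term of their per-epoch sum of squared gradients (which is already $\mathcal{F}_s$-measurable, since $x_0^{s+1} = x_m^s$ is fixed before epoch $s+1$'s randomness is drawn).

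Second, I would set up a supermartingale. Let $W_s = f_T(x_0^{s+1})$ for $s \geq 1$. Since $x_0^{s+1}$ is $\mathcal{F}_s$-measurable, so is $\{\tau \leq s\}$, and $\tau$ is an $(\mathcal{F}_s)$-stopping time. On $\{\tau > s\}$ the definition of $\tau$ gives $\|\nabla f_T(x_0^{s+1})\|^2 > \epsilon$, so the descent bound reduces to $\mathbb{E}[W_{s+1} \mid \mathcal{F}_s] \leq W_s - \gamma_n \epsilon$ on that event. Defining
\begin{equation*}
M_s = W_{s \wedge \tau} + \gamma_n \, \epsilon \, (s \wedge \tau),
\end{equation*}
a brief case analysis — the increment vanishes on $\{\tau \leq s\}$, while on $\{\tau > s\}$ the time part increases by one and the $W$ part has conditional expectation at most $-\gamma_n \epsilon$ — shows that $(M_s)_{s \geq 1}$ is an $(\mathcal{F}_s)$-supermartingale.

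For the final step, I would use $\mathbb{E}[M_n] \leq \mathbb{E}[M_1] = f_T(x_m^1) + \gamma_n \epsilon$ for every $n \geq 1$, together with the lower bound $M_n \geq f^{*} + \gamma_n \epsilon\,(n \wedge \tau)$ coming from Assumption \ref{asu:lipgrad}. Combining these yields $\mathbb{E}[n \wedge \tau] \leq 1 + (f_T(x_m^1) - f^{*})/(\gamma_n \epsilon)$; sending $n \to \infty$ via the monotone convergence theorem and substituting the value of $\gamma_n$ produces the claimed bound. The main obstacle I anticipate is the first step: carefully verifying that the Reddi et al.\ recursion, instantiated with their specific auxiliary parameters and with our $\eta$ and $m$, yields the clean constant $1/(40 L n_T^{2/3})$. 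Once that per-epoch descent inequality is in hand, the supermartingale construction and the passage to the limit are routine.
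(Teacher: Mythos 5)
Your proposal is correct and arrives at the stated bound with exactly the right constants, and it is in essence the paper's own argument, lightly repackaged. The paper likewise imports the per-epoch bound from Reddi et al.\ as its Proposition~\ref{prop:svrg-prior} (with $\xi=1/4$, $\eta=1/(4Ln_T^{2/3})$, $m=\lfloor 4n_T/3\rfloor$ giving $\gamma \geq 1/(40Ln_T^{2/3})$, which is your $\gamma_n$, the only modification being conditional rather than plain expectations), and likewise finishes by optional stopping at the truncated time $\tau(\epsilon)\wedge r$ followed by monotone convergence. The organizational difference: the paper keeps the full inner sum $\sum_{t=0}^{m-1}\|\nabla f_T(x_t^{s+1})\|^2$, telescopes function values over epochs $s=1,\dots,q$, centers the error terms $\delta_s$, and invokes its generic stopped-martingale lemma (Proposition~\ref{mart-like-thm}) to annihilate $\mathbb{E}[\sum_{s\leq \tau\wedge r}\delta_s]$, only afterwards dropping to the $t=0$ terms; you discard all but the $t=0$ term at the outset (valid, since the terms are nonnegative and $x_0^{s+1}=x_m^s$ is measurable before the epoch's samples are drawn) and absorb the drift into the explicit compensated supermartingale $M_s = W_{s\wedge\tau}+\gamma_n\epsilon\,(s\wedge\tau)$. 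The two routes are mathematically interchangeable; your formulation is somewhat more self-contained and closer to the renewal-reward style of the run-time analyses the paper cites, while the paper's version exposes the full gradient sum along the way. Two small repairs for a final write-up: your filtration indexing is off by one relative to the paper's convention (under its definition $\mathcal{F}_s$ already contains epoch samples $y_t^s$ and hence makes $x_m^{s+1}$ measurable, so the one-step descent inequality should condition on $\mathcal{F}_{s-1}$, with respect to which $x_0^{s+1}$ and the event $\{\tau>s\}$ are measurable), and you should record integrability of $W_s$ before applying the supermartingale inequality — immediate by induction from the descent inequality together with the lower bound $f^*$ of Assumption~\ref{asu:lipgrad}.
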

 
  Note that Proposition \ref{prop:svrg-conv} counts the number of epochs until an approximate stationary point is generated. A bound on the number of IFO calls can be obtained by multiplying $\tau$  by the number of IFO calls per epoch, which is $n_{T}+2 m$. This immediately leads to the following result:
  
  \begin{cor}\label{svrg-ifo}
    Let Assumptions \ref{asu:lipgrad} and \ref{asu:finite-sum-fns} hold and suppose the step-size $\eta$ and epoch length $m$ are defined as in Proposition \ref{prop:svrg-conv}.
    Then, the expected number of IFO calls until SVRG returns an approximate stationary point is
    $\mathbb{E}
    \left[
      \mathrm{IFO}\left(\epsilon\right)
      \right]
    = \mc{O}((n_T^{5/3}/\epsilon) + n_T )
    $.
  \end{cor}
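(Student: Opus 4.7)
The plan is to reduce the IFO count to a direct function of the stopping time $\tau(\epsilon)$ defined in Proposition \ref{prop:svrg-conv}, then invoke that proposition and substitute the prescribed values of $\eta$ and $m$.

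First I would count IFO calls per epoch in Algorithm \ref{algo:svrg}. Each epoch $s$ begins with the full gradient in Line 4, which costs exactly $n_T$ IFO calls. If the stopping test in Line 5 fails, the inner loop of Lines 6--10 runs for $m$ steps, and Line 8 invokes the oracle at the two points $x_t^{s+1}$ and $x_0^{s+1}$, contributing $2m$ additional IFO calls for that epoch. Hence an epoch in which the inner loop runs uses $n_T + 2m$ IFO calls, and the epoch in which the algorithm returns uses only $n_T$ calls. Since $\tau(\epsilon)$ is exactly the index of the first epoch in which the stopping test succeeds, I can write the pathwise identity
\begin{equation*}
\mathrm{IFO}(\epsilon) \;=\; (\tau(\epsilon) - 1)(n_T + 2m) + n_T \;\leq\; \tau(\epsilon)(n_T + 2m).
\end{equation*}

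Taking expectations and applying Proposition \ref{prop:svrg-conv} would give
\begin{equation*}
\mathbb{E}[\mathrm{IFO}(\epsilon)] \;\leq\; (n_T + 2m)\,\mathbb{E}[\tau(\epsilon)] \;\leq\; (n_T + 2m)\left( 1 + \frac{40 L n_T^{2/3}(f_T(x_m^1) - f^{*})}{\epsilon}\right).
\end{equation*}
Substituting $m = \lfloor 4 n_T/3 \rfloor$ yields $n_T + 2m \leq n_T + 8 n_T/3 = 11 n_T/3 = \mathcal{O}(n_T)$. Distributing the product then gives an $\mathcal{O}(n_T)$ term from the $1$ inside the parentheses and an $\mathcal{O}(n_T \cdot n_T^{2/3}/\epsilon) = \mathcal{O}(n_T^{5/3}/\epsilon)$ term from the other summand, which combines to the claimed $\mathcal{O}(n_T^{5/3}/\epsilon + n_T)$.

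There is no real obstacle here; the corollary is essentially a bookkeeping consequence of Proposition \ref{prop:svrg-conv}. The only point requiring mild care is the pathwise IFO identity above, since one could otherwise mistakenly charge the $2m$ inner-loop gradients to the terminal epoch. Because $\tau(\epsilon)$ is integrable by Proposition \ref{prop:svrg-conv}, the expectation step is immediate and no appeal to the optional stopping theorem or any additional integrability argument is needed.
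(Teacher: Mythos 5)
Your proof is correct and matches the paper's own argument, which likewise bounds the cost by multiplying the expected number of epochs from Proposition \ref{prop:svrg-conv} by the per-epoch cost $n_T + 2m$ and substituting $m = \lfloor 4 n_T/3 \rfloor$. Your pathwise accounting $(\tau(\epsilon)-1)(n_T+2m)+n_T$ is in fact slightly more careful than the paper's one-line remark, and your observation that no optional stopping argument is needed here is accurate.
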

  This result may be compared with Corollary  4 of \citep{reddi2016stochastic}, which concerns an upper bound on the IFO calls needed for the expected (squared) norm of the gradient at a randomly selected iterate to be less than $\epsilon$. Our result concerns the expected number of IFO calls before the algorithm terminates with an iterate that is guaranteed to be an approximate stationary point.   Note that introducing  early stopping does not add any complexity, compared to SGD. This is because the full gradient is already calculated at each iteration, and the only additional step in the algorithm is computation of the norm.

\section{\uppercase{Generalization Properties}}\label{sect:genprop}
Typically, the training and validation sets are made of independent samples from a test distribution $\mu$, and it is of interest to estimate the model performance relative to this test distribution. Formally, define the generalization error $f_{G}$ as
$f_{G}:\reals^d \to \reals$ as $f_G(x) = \mathbb{E}_{y\sim\mu}[f(y,x)]$. In this section, we consider upper bounds on the quantity
\begin{equation}\label{expect-grad}
  \mathbb{E}
  \left[
    \left\|\nabla f_{G}(x_{\tau(\epsilon)})\right\|^2\right],
\end{equation}
where $x_{\tau(\epsilon)}$ is the iterate returned by an optimization algorithm with early stopping.
Note that this expectation is  over both the variates generated by optimization and the random choice of the datasets $Y_V$ and $Y_T$.  In this section we show how Wasserstein concentration results can be used to bound \eqref{expect-grad}, in terms of both the norm of the gradient of the training function, and the Wasserstein distance between $\mu$ and its empirical version used for optimization.

To begin, note that under Assumption \ref{asu:empirical-distance}, the gradient of the generalization error can be related to the gradient of the training error by
$$
\mathbb{E}[\|\nabla f_{G}(x_{\tau(\epsilon)})\|]
\leq
\mathbb{E}[\|\nabla f_{T}(x_{\tau(\epsilon)})\|]
+
G \mathbb{E}[d_1(\mu_T,\mu)]$$
The second term on the right is the expected distance between the empirical measure $\mu_T$ and the data distribution $\mu$. Intuitively, for large values of $n_T$ the empirical distribution should be a good approximation to the true distribution, and the distance should be small. Investigations into the convergence rate of $d_{p}(\mu,\mu_T)$ as a function of $n_T$ has received significant attention, beginning with \citep{dudley1969}. For more background we refer the reader to \citep{dereich2013},\citep{weedbach2017} and references therein. For our purposes, the basic idea can be illustrated with the following result.
\begin{thm}[\citep{dereich2013}, Theorem 1]\label{quantization} 
  For $d\geq 3$, let $\mu$ be a measure on $\reals^d$, such that 
  $J = \mathbb{E}_{y\sim\mu}\left[\left\|y\right\|^{3}\right]^{1/3} < \infty,$   and let $\mu_{N}$ be an empirical version of $\mu$  constructed from $N$ samples.
Then there is a constant $\kappa_d$  such that
\[
\mathbb{E}\left[d_2\left(\mu,\mu_N\right)^2\right] 
\leq 
\kappa_d
J
N^{-3/d}.
\]
\end{thm}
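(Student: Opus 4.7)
The plan is to prove the bound via an explicit hierarchical coupling between $\mu$ and the empirical measure $\mu_N$, using a dyadic partition of $\reals^d$ together with a tail-truncation argument powered by the third-moment bound $J$. The upper bound on $d_2^2$ will come from evaluating the transport cost of this coupling in expectation, rather than from a clever transport plan constructed after the samples are drawn.

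First I would fix a reference radius $R>0$ and cover a ball $B(0,R_0)$ (for some $R_0$ to be optimized) by nested dyadic cubes at levels $k=0,1,\ldots,K$, with cubes at level $k$ having side length $2^{-k}R_0$. Given the i.i.d. samples $Y_1,\ldots,Y_N\sim\mu$, construct a coupling $\gamma$ hierarchically: at the finest level $K$, within each cube $C$ pair up $\min\{\mu(C),\mu_N(C)\}$ mass at cost at most $d\,(2^{-K}R_0)^2$ per unit, and promote the unmatched surplus to the parent cube at level $K-1$, where the matching is repeated. The total expected cost then splits as a sum over levels of (expected $\ell^1$-discrepancy at level $k$) $\times$ (squared diameter of a level-$k$ cube), plus a single tail term for mass outside $B(0,R_0)$.

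Next I would bound each level contribution. The number of cubes intersecting $B(0,R_0)$ at level $k$ is $O(2^{dk})$. The expected $\ell^1$-discrepancy $\mathbb{E}\sum_{C\in P_k}|\mu(C)-\mu_N(C)|$ is controlled by Cauchy--Schwarz applied to the multinomial variance identity, giving a bound of order $(2^{dk}/N)^{1/2}$. Multiplying by $d\,(2^{-k}R_0)^2$ and summing geometrically in $k$ from $0$ to $K$ gives an in-ball contribution of order $R_0^2\, 2^{-2K} + R_0^2\, N^{-1/2}\cdot 2^{K(d-4)/2}$ (the first term coming from capping matching at the finest level, the second from the variance). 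Choosing $K$ to balance these two terms yields a bound of order $R_0^2\, N^{-2/d}$ for $d\geq 3$. For the tail, Markov's inequality applied to $\mathbb{E}[\|Y\|^3]=J^3$ gives $\mu(\reals^d\setminus B(0,R_0))\leq J^3/R_0^3$; the transport cost of this mass, controlled again via the third moment, contributes an additional $O(J^3/R_0)$ term to $\mathbb{E}[d_2^2]$. Balancing $R_0^2 N^{-2/d}$ against $J^3/R_0$ by choosing $R_0$ proportional to $J\, N^{2/(3d)}$ produces the claimed rate $\kappa_d\, J\, N^{-3/d}$.

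The main obstacle I expect is making the exponents line up precisely: the interplay between the moment exponent ($3$), the Wasserstein order ($p=2$), and the dimension $d$ has to yield exactly $N^{-3/d}$, and getting there requires the optimization over $K$ and $R_0$ to be carried out carefully so that the variance-driven term, the discretization-error term, and the tail-mass term all balance simultaneously. A secondary technical nuisance is that discrepancies $\mu(C)-\mu_N(C)$ across cubes at the same level are not independent (they sum to zero over a partition), so the variance estimate must be obtained from the multinomial covariance structure rather than from cube-by-cube independence; and one must verify that the constant $\kappa_d$ remains finite for all $d\geq 3$, with the condition $d\geq 3$ precisely arising from the requirement that the geometric sum $\sum_k 2^{-k(4-d)/2}$ be controlled after the optimal choice of depth.
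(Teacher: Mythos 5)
The first thing to say is that the paper contains no proof of this statement: Theorem~\ref{quantization} is imported verbatim from \citep{dereich2013} (their Theorem~1, with the constant $\kappa_d$ taken from their Theorem~3), so the only question is whether your blind sketch actually establishes the displayed bound. It does not, and the failure is concrete: your final balancing step is arithmetically inconsistent with the claimed rate. Taking your in-ball estimate $R_0^2 N^{-2/d}$ and your tail estimate $J^3/R_0$, the choice $R_0 \propto J N^{2/(3d)}$ does equalize the two terms, but at the common value $J^2 N^{-2/(3d)}$, not $\kappa_d\, J\, N^{-3/d}$ (at $d=6$, for instance, these are $J^2 N^{-1/9}$ versus $J N^{-1/2}$). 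There are two secondary problems as well. First, the geometric sum $\sum_{k\leq K} 2^{k(d-4)/2}$ is dominated by its top term only when $d\geq 5$; for $d=3$ it converges, so the best in-ball bound your scheme gives is $R_0^2 N^{-1/2}$ (with a logarithm at $d=4$), and your claim of $R_0^2 N^{-2/d}$ ``for $d\geq 3$'' fails before the tail even enters. Second, a one-shot truncation at a single radius $R_0$ is structurally too crude to produce a bound linear in the moment norm: the argument in \citep{dereich2013} (and in Fournier--Guillin) decomposes $\reals^d$ into dyadic annuli, runs the cube construction inside each shell of radius $\sim 2^j$, weights by the shell mass ($\lesssim J^3 2^{-3j}$ by Markov), and sums over shells; that layered structure is what makes the moment dependence come out right.

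More fundamentally, no transport construction can ``make the exponents line up'' in the way you hope, because the inequality as printed is not scale-covariant: pushing $\mu$ forward under $y \mapsto \lambda y$ multiplies the left-hand side $\mathbb{E}[d_2(\mu,\mu_N)^2]$ by $\lambda^2$ but the right-hand side $\kappa_d J N^{-3/d}$ only by $\lambda$, so if the bound held for every $\mu$ with finite third moment and a constant depending only on $d$, sending $\lambda \to \infty$ would force $\mathbb{E}[d_2(\mu,\mu_N)^2] = 0$. (A fixed counterexample: for the uniform measure on the unit cube in $d=3$ one has $\mathbb{E}[d_2^2] \asymp N^{-2/3}$, which eventually exceeds $\kappa_3 J N^{-1}$.) It is telling that your honest computation returned the quantity $J^2 N^{-2/(3d)}$, which \emph{is} homogeneous of degree two in length, as any coupling-cost bound must be; that is precisely the signal that the statement in the paper is a mis-transcription of the cited theorem, whose moment factor must enter to a power restoring homogeneity (a bound of the form $J^2 N^{-2/d}$ for the squared distance, or $J N^{-1/d}$ for the unsquared one, is what the dyadic method genuinely delivers for $d$ large enough). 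So the gap is twofold: your optimization over $K$ and $R_0$ does not yield the claimed rate, and the claimed rate is not provable in the stated generality. To repair the sketch, prove the scale-correct Dereich--Scheutzow--Schottstedt-type bound via the shell decomposition, and treat the displayed normalization $\kappa_d J N^{-3/d}$ as an error to be corrected rather than a target to be reached.
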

The constant $\kappa_d$ is explicitly given in (\citep{dereich2013}, Theorem 3). Note the dependence on the dimension $d$ on the right hand side of this bound, which implies a very slow convergence of the empirical distance in high dimensions. Despite this, the bound is asymptotically tight, for large values of $N$.  An example of a distribution that displays convergence of order $N^{-1/d}$ is the uniform distribution on $[0,1)^d$ (for a proof see Theorem 2 in \citep{dereich2013}). In a machine learning context,  this would correspond to a regression problem where there is no relation between the input and output. We note however, that stronger rates of convergence can be obtained for restricted classes of measures, and that for smaller values of $N$ the convergence rate can be more favorable. This is explored in \citep{weedbach2017} where they improve the bounds for a number of classes of distributions. For instance, when $\mu$ is a discrete distribution, the following holds:
\begin{thm}[\citep{weedbach2017}, Proposition 13]\label{discrete-quantization}
  Let $\mu$ be a measure that is supported on at most $m$ points within the unit sphere in $\reals^d$, and let $\mu_N$ be an empirical version of $\mu$ constructed from $N$ samples. Then
  \[
  \mathbb{E}\left[d_{2}\left(\mu,\mu_N\right)^2\right]
  \leq
  84\sqrt{\frac{m}{N}}.
  \]
  \end{thm}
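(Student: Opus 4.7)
The plan is to exploit the finite support: both $\mu$ and $\mu_N$ live on a common set of at most $m$ atoms $y_1,\dots,y_m$ inside the unit sphere, so the $2$-Wasserstein cost can be controlled by the total variation distance times the squared diameter of the support. First I would write $\mu=\sum_i p_i\delta_{y_i}$ and $\mu_N=\sum_i \hat{p}_i\delta_{y_i}$ with $N\hat{p}_i\sim\mathrm{Binomial}(N,p_i)$, and construct an explicit coupling: keep mass $\min(p_i,\hat{p}_i)$ self-coupled at each atom and transport the excess $\sum_i(p_i-\hat{p}_i)_+=\|\mu-\mu_N\|_{TV}$ arbitrarily among the remaining atoms. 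Since the diameter of the support is at most $2$, every unit of transported mass costs at most $4$ in squared distance, giving the pointwise bound
\[
d_2(\mu,\mu_N)^2\;\le\;4\|\mu-\mu_N\|_{TV}\;=\;2\sum_{i=1}^m |p_i-\hat{p}_i|.
\]

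Next I would take expectations and handle each atom separately. By Jensen's inequality and the variance of a scaled binomial, $\mathbb{E}[|p_i-\hat{p}_i|]\le\sqrt{p_i(1-p_i)/N}\le\sqrt{p_i/N}$. Summing and applying Cauchy--Schwarz to exploit $\sum_i p_i=1$ yields $\sum_i\sqrt{p_i/N}\le\sqrt{m/N}$. Combining with the pointwise bound above produces $\mathbb{E}[d_2(\mu,\mu_N)^2]\le 2\sqrt{m/N}$, which is of the claimed form $C\sqrt{m/N}$.

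The main obstacle is not the probabilistic estimate itself, which is elementary, but rather matching the numerical constant $84$ reported by Weed and Bach. Their argument operates in a more general setting (measures \emph{approximately} supported on $m$ points, quantified via covering numbers and a chaining/net decomposition at multiple scales), which inflates the constant relative to the one-scale coupling above. A secondary subtlety is that the simple coupling I construct is typically far from optimal, so tighter per-instance bounds would require either a Knothe--Rosenblatt style rearrangement or a dual (Kantorovich potential) argument; fortunately, neither is needed to obtain the $\sqrt{m/N}$ rate, which is what subsequent results in Section \ref{sect:genprop} rely on.
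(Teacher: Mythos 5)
Your proposal is correct, and in fact the paper contains no proof of this statement at all: it is imported verbatim from Weed and Bach (their Proposition~13), so your one-scale coupling is a genuinely different and more elementary route than the cited source's argument. Your coupling is valid because $\mu_N$ is automatically supported on the atoms of $\mu$: leaving mass $\min(p_i,\hat{p}_i)$ in place and redistributing the excess $\sum_i (p_i-\hat{p}_i)_+ = \frac{1}{2}\sum_i|p_i-\hat{p}_i|$ at squared cost at most $4$ (the squared diameter of the unit ball) gives $d_2(\mu,\mu_N)^2 \leq 2\sum_i |p_i-\hat{p}_i|$, and the marginal law $N\hat{p}_i\sim\mathrm{Binomial}(N,p_i)$ together with Jensen and Cauchy--Schwarz gives $\mathbb{E}\sum_i|p_i-\hat{p}_i|\leq \sum_i\sqrt{p_i/N}\leq\sqrt{m/N}$; note that only the marginal distributions are needed since expectation is linear, so the dependence among the $\hat{p}_i$ is harmless. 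This yields $\mathbb{E}[d_2(\mu,\mu_N)^2]\leq 2\sqrt{m/N}$, which implies the stated bound with constant $84$ a fortiori. What the two approaches buy: yours is self-contained, a few lines long, and sharpens the constant from $84$ to $2$ for exactly finitely supported measures; Weed--Bach's constant arises, as you correctly diagnose, from their multiscale covering-number machinery, whose payoff is generality --- it handles measures only \emph{approximately} supported on $m$ points and feeds into their dimension-dependent rates --- but that generality is not used here, since Corollary~\ref{cor:wass-svr} invokes the bound only for an exactly discrete $\mu$, where your argument would even improve the constant $168$ appearing there to $4$.
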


Depending on the properties of the testing distribution, either one of Theorems \ref{quantization} or  \ref{discrete-quantization} can be used to investigate the dependence of the generalization error on the data set size $n_{T}$. This would involve having some prior knowledge about the nature of the testing set.

In the remainder of this section, we consider combining the concentration bounds with the optimization bounds proved for SVRG. Note that the basic ideas can be applied just as well to SGD or DSGD.

For SVRG, it is natural to express the bound in terms of the number of training examples, and we obtain the following
\begin{cor}\label{cor:wass-svr}
    Let Assumption \ref{asu:empirical-distance} and the conditions of Proposition \ref{prop:svrg-conv} hold. Further assume  $J = \mathbb{E}_{y\sim\mu}\left[\|y\|^{3}\right]^{1/3} < \infty$ and the training set $Y_T$  is an empirical version of $\mu$.
    If $x_{\tau}(\epsilon)$ is the output of Algorithm \ref{algo:svrg}, then
    $$
    \mathbb{E}[\|\nabla f_{G}(x_{\tau(\epsilon)})\|^2] \leq 2\epsilon + 2 G^2\kappa_d J n_T^{-3/d}.
    $$
        Alternatively, if $\mu$ is a supported on at most $m$ points, then
    $$
    \mathbb{E}[\|\nabla f_{G}(x_{\tau(\epsilon)})\|^2] \leq 2\epsilon + 168 G^2 \sqrt{\frac{m}{n_T}}.
    $$
\end{cor}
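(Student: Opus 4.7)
The plan is to combine three ingredients: the deterministic termination guarantee for Algorithm \ref{algo:svrg}, a Wasserstein--Lipschitz comparison between $\nabla f_T$ and $\nabla f_G$, and the empirical concentration bounds of Theorems \ref{quantization} and \ref{discrete-quantization}. The main obstacle, if any, is merely to track which expectation is over optimization randomness and which is over the random draw of $Y_T$; the rest of the argument is an assembly of the pieces already developed in the paper.

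First I would observe that Proposition \ref{prop:svrg-conv} bounds $\mathbb{E}[\tau(\epsilon)]$ by a finite quantity, so $\tau(\epsilon) < \infty$ almost surely. Moreover, unlike the SGD/DSGD variants, the stopping test on Line~5 of Algorithm \ref{algo:svrg} examines the \emph{exact} training-set gradient $g^{s+1} = \nabla f_T(x_0^{s+1})$. Consequently, at the terminal epoch we have the pathwise (conditional on the data) guarantee $\|\nabla f_T(x_{\tau(\epsilon)})\|^2 \leq \epsilon$, rather than merely an in-expectation statement.

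Next I would extend the coupling argument used to derive \eqref{grad-dist} from the pair $(\mu_V,\mu_T)$ to the pair $(\mu,\mu_T)$. Concretely, under Assumption \ref{asu:empirical-distance}, for any coupling $\gamma$ of $\mu$ and $\mu_T$,
\[
\|\nabla f_G(x) - \nabla f_T(x)\| = \left\|\mathop{\mathbb{E}}_{(u,v)\sim\gamma}[\nabla_x f(u,x) - \nabla_x f(v,x)]\right\| \leq G \mathop{\mathbb{E}}_{(u,v)\sim\gamma}[\|u-v\|],
\]
and taking the infimum over couplings yields $\|\nabla f_G(x)-\nabla f_T(x)\| \leq G\, d_1(\mu,\mu_T)$ for every $x \in \reals^d$. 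Applied at $x = x_{\tau(\epsilon)}$, the triangle inequality and the elementary bound $(a+b)^2 \leq 2a^2 + 2b^2$ give
\[
\|\nabla f_G(x_{\tau(\epsilon)})\|^2 \leq 2\|\nabla f_T(x_{\tau(\epsilon)})\|^2 + 2G^2 d_1(\mu,\mu_T)^2 \leq 2\epsilon + 2G^2 d_1(\mu,\mu_T)^2.
\]

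Finally I would take the total expectation of this inequality (over optimization noise, independent of the data draw, and over $Y_T$). Using monotonicity of the $p$-Wasserstein distance in $p$, namely $d_1(\mu,\mu_T) \leq d_2(\mu,\mu_T)$, we obtain
\[
\mathbb{E}[\|\nabla f_G(x_{\tau(\epsilon)})\|^2] \leq 2\epsilon + 2G^2 \mathbb{E}[d_2(\mu,\mu_T)^2].
\]
Under the moment condition $J<\infty$, Theorem \ref{quantization} gives $\mathbb{E}[d_2(\mu,\mu_T)^2] \leq \kappa_d J n_T^{-3/d}$, which yields the first claimed bound. When $\mu$ is supported on at most $m$ points of the unit sphere, Theorem \ref{discrete-quantization} gives $\mathbb{E}[d_2(\mu,\mu_T)^2] \leq 84\sqrt{m/n_T}$, which yields the second claimed bound with constant $2 \cdot 84 = 168$. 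This completes the proof plan.
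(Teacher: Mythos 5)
Your proposal is correct and takes essentially the same route as the paper's proof: the pathwise guarantee $\|\nabla f_T(x_{\tau(\epsilon)})\|^2 \leq \epsilon$ from the exact-gradient stopping test, the coupling argument extending \eqref{grad-dist} to the pair $(\mu,\mu_T)$, squaring via $(a+b)^2 \leq 2a^2+2b^2$, the monotonicity $d_1 \leq d_2$, and then Theorems \ref{quantization} and \ref{discrete-quantization}. The one step the paper verifies that you elide is the interchange $\nabla f_G(x) = \mathbb{E}_{y\sim\mu}\left[\nabla_x f(y,x)\right]$, which your coupling identity implicitly assumes and which the paper justifies by a dominated-differentiation criterion using Assumptions \ref{asu:lipgrad} and \ref{asu:empirical-distance} together with the finite first moment of $\mu$; this is a routine but necessary addition since $\mu$ need not have bounded support.
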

    Together with bounds on the expected running time,  this  result could potentially let one balance between the resources needed to minimize the training function, and the resources needed to gather training data. In order to minimize the right hand side, one can either choose a smaller $\epsilon$, leading to longer running times, or choose a large $n_{T}$, leading to more sampling.
    
 Note that Corollary \ref{cor:wass-svr} is accounts for data distribution properties (via the $3^{rd}$ moment $J$, or via the number of points in the discrete case) and does not depend on the number of iterations used in SGD. This result could be compared with \citep{tfgb}, where the authors proved a bound on the generalization gap for function values in terms of the number of iterations $T$ and the number samples in the training set. There, the bound is increasing with $T$. An interesting avenue for future work would be to investigate the combination of the two approaches.

\section{\uppercase{Discussion}}
This work presented an analysis of several stochastic gradient-based optimization algorithms that use early stopping. Our focus was on procedures that return the first point satisfying a stopping criterion, and we analyzed the expected running time and number of gradient evaluations needed to meet this criterion.

For SGD, we analyzed the use of early stopping with a validation function, and obtained a bound on the expected number of gradient evaluations needed to find approximate stationary points. The analysis allows for biases in the update direction, subject to a geometric drift condition on the error terms. We specialized this analysis to bound the expected running time of decentralized SGD, a distributed variant of SGD. We modeled DSGD as a biased form of SGD, with a bias term that is controlled in part by the mixing coefficient of the communication graph. Next, we turned to a variant of nonconvex SVRG that employs early stopping, obtaining a bound on the expected number of IFO calls and gradient evaluations used by the algorithm. Lastly, we considered how Wasserstein concentration bounds can be leveraged to bound the generalization performance of the iterate returned by the algorithms, expressed in terms of the number of samples used to define the input datasets, and properties of the data distribution.

We would like to highlight two avenues for future work. Our analysis of SGD has a condition on the step-size that depends on the epoch length $m$ (Corollary \ref{cor:bias-cvg}). It is an interesting question whether this requirement can be removed. Secondly, in our analysis of SVRG, introducing early stopping let to a convergence bound that is essentially the same as the rate obtained using randomization. For SGD, the expected number of IFO calls increases quadratically with the epoch length, and we leave it as an open question whether this is feature can also be relaxed.

\section*{ACKNOWLEDGEMENTS}
This material is based upon work supported by the U.S. Department of Energy, Office of Science, under contract number \textsc{DE\text{-}0012704}.
\section*{REFERENCES}
\renewcommand{\bibsection}{}
\bibliography{super}
\bibliographystyle{icml2019}
\clearpage
\onecolumn
\section*{Appendix: Bounding the expected run-time of nonconvex optimization with early stopping}

\setcounter{section}{0}
\renewcommand{\thesection}{\Alph{section}}
\section{Preliminaries}
 Our analyses make use of a quadratic bound for the training function which follows from Assumption \ref{asu:lipgrad}:
  \begin{equation}\label{eqn:taylor}
    \forall x,v \in \reals^n, \quad f_T(x+v) \leq f_T(x) + \nabla f_T(x)^{T}v + \frac{L}{2}\|v\|^{2}.
  \end{equation}
  For a derivation of Equation \eqref{eqn:taylor}, see for instance Lemma 1.2.3 in \cite{nesterov2013introductory}.
  \section*{Stochastic processes \label{stoch-proc}}
  The formal setting of a stochastic optimization algorithm involves a probability space
  $(\Omega,\mc{F},\mathbb{P})$, consisting of a sample space
  $\Omega$, a
 $\sigma$-algebra $\mc{F}$ of subsets of $\Omega$ and a probability measure $\mathbb{P}$ on the subsets of $\Omega$ that are in $\mc{F}$.  The algorithm takes an initial point $x_1$ and defines a sequence of random variables $\{x_t(\omega)\}_{t>1}$. Intuitively $\Omega$ represents the random data used by the algorithm, such as  indices used to define  mini-batches. For ease of notation we will omit the dependence of random variates in the algorithms on 
 $\omega \in \Omega$.  A filtration $\{\mc{F}_{t}\}_{t=0,1,\hdots}$ is an increasing sequence of $\sigma$-algebras, with the interpretation that $\mc{F}_{t}$ represents the information available to an algorithm up to and including time $t$.  
A random variable $x: \Omega \to \reals^d$ is said to be $\mc{F}_{t}$ measurable if it can be expressed in terms of the state of the algorithm up and including time $t$. A rule for stopping an algorithm is represented as a stopping time, which is a random variable $
\tau :\Omega \to \{0,1,\hdots, \infty\}$ 
with the property that the decision of whether to stop or continue at time $n$ is only made based on the information available up to and including time $n$.

The following proposition will be used through out our analysis of the different algorithms.
\begin{prop}\label{mart-like-thm}
Let
$\tau$
be a stopping time with respect to a filtration
$\{\mc{F}_t \}_{t=0,1,\hdots}$.
Suppose there is a number
$c <\infty$
such that
$\tau \leq c$
with probability one.
Let
$x_1,x_2,\hdots$
be any sequence of random variables  such that each $x_t$ is $\mc{F}_{t}$-measurable and
$\mathbb{E}[\|x_t\|] < \infty$.
Then
\begin{equation}\label{stopping}
\mathbb{E}\left[\sum\limits_{t=1}^{\tau}x_t\right]
=
\mathbb{E}\left[\sum\limits_{t=1}^{\tau}\mathbb{E}\left[x_t \mid \mc{F}_{t-1} \right]\right].
\end{equation}
\end{prop}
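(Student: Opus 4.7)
The plan is to reduce the random sum to a deterministic finite sum of indicator-weighted terms, then pull the conditional expectation through each summand using measurability of the stopping indicator. Since $\tau \leq c$ almost surely, I would write
\[
\sum_{t=1}^{\tau} x_t \;=\; \sum_{t=1}^{c} x_t \, \mathbf{1}_{\{t \leq \tau\}},
\]
and similarly for the right-hand side. The integrability hypothesis $\mathbb{E}[\|x_t\|] < \infty$, combined with the fact that this is a finite sum of $c$ terms, means every object in sight is absolutely integrable and I can freely exchange expectation and summation (ordinary linearity, no Fubini subtleties).

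The key observation, and really the only non-trivial step, is that for each fixed $t \in \{1,\dots,c\}$ the event $\{t \leq \tau\}$ is $\mc{F}_{t-1}$-measurable. This follows from the definition of a stopping time: $\{\tau \leq t-1\} \in \mc{F}_{t-1}$, and $\{t \leq \tau\}$ is its complement, so it lies in $\mc{F}_{t-1}$ as well. With this in hand, I apply the tower property together with the $\mc{F}_{t-1}$-measurability of $\mathbf{1}_{\{t \leq \tau\}}$ to obtain, for each $t$,
\[
\mathbb{E}\!\left[ x_t \, \mathbf{1}_{\{t \leq \tau\}} \right]
\;=\;
\mathbb{E}\!\left[ \mathbb{E}\!\left[ x_t \, \mathbf{1}_{\{t \leq \tau\}} \,\middle|\, \mc{F}_{t-1} \right] \right]
\;=\;
\mathbb{E}\!\left[ \mathbf{1}_{\{t \leq \tau\}} \, \mathbb{E}\!\left[ x_t \,\middle|\, \mc{F}_{t-1} \right] \right].
\]
Summing this identity over $t = 1, \dots, c$ and re-collapsing the indicator-weighted sum back into a sum up to $\tau$ produces the claimed equality.

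I do not expect any real obstacle here; the argument is the standard ``optional-sampling trick'' for bounded stopping times, and the bounded-by-$c$ hypothesis is what makes everything go through without any uniform integrability or dominated convergence invocation. The only place one has to be slightly careful is the direction of the inequality $\{t \leq \tau\}$ versus $\{t < \tau\}$ when verifying $\mc{F}_{t-1}$-measurability; writing it as the complement of $\{\tau \leq t-1\}$ makes this unambiguous. If the hypothesis $\tau \leq c$ were dropped, one would instead truncate at $\tau \wedge n$, apply the above identity at each $n$, and then pass to the limit using monotone or dominated convergence under an appropriate integrability assumption on $\sum_t \|x_t\|$, but the statement as given avoids this complication entirely.
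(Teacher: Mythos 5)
Your proof is correct, but it takes a genuinely different (and more elementary) route than the paper. The paper defines the compensated sums $S_0=0$, $S_t=\sum_{i=1}^{t}\left(x_i-\mathbb{E}[x_i\mid\mc{F}_{i-1}]\right)$, observes that $(S_t)$ is a martingale, and invokes the optional stopping theorem for the bounded stopping time $\tau$ to conclude $\mathbb{E}[S_\tau]=\mathbb{E}[S_0]=0$, which is exactly \eqref{stopping}. You instead unroll the random sum as a deterministic finite sum of indicator-weighted terms and push a conditional expectation through each one, using precisely the predictability fact $\{t\leq\tau\}=\Omega\setminus\{\tau\leq t-1\}\in\mc{F}_{t-1}$ together with the tower property. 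This is, in effect, a from-scratch proof of the bounded-stopping-time case of optional stopping specialized to this setting: the measurability observation you isolate is the same fact that drives the textbook proof of the theorem the paper cites as a black box. What your version buys is self-containedness and transparency about where boundedness of $\tau$ is used (only to make the sum finite, so no uniform-integrability or convergence argument is needed); what the paper's version buys is brevity and a statement that slots directly into the martingale machinery it uses elsewhere. Two small points to tidy in your write-up: the bound $c$ is only assumed to be a finite number, not an integer, so the upper limit of your deterministic sum should be $\lceil c\rceil$ (or any integer majorant of $\tau$); and for the right-hand side to be well-defined you should note that $\mathbb{E}[x_t\mid\mc{F}_{t-1}]$ is itself integrable, which follows from the conditional Jensen inequality $\left\|\mathbb{E}[x_t\mid\mc{F}_{t-1}]\right\|\leq\mathbb{E}\left[\|x_t\|\mid\mc{F}_{t-1}\right]$ and the hypothesis $\mathbb{E}[\|x_t\|]<\infty$. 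Neither issue is a gap in the argument.
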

\begin{proof}
We argue that \eqref{stopping} is a consequence of the optional stopping theorem (Theorem 10.10 in \citep{williams1991probability}).
Define $S_0= 0$ and for $t\geq 1$, let
$S_t =\sum\limits_{i=1}^{t}\left(x_i - \mathbb{E}[x_i \mid \mc{F}_{i-1}]\right)$.
Then
$S_0,S_1,\hdots$
is a martingale with respect to the filtration
$\{\mc{F}_{t}\}_{t=0,1,\hdots}$,
and the optional stopping theorem implies
$\mathbb{E}[S_{\tau}] = \mathbb{E}[S_0]$.
But
$\mathbb{E}[S_0] = 0$, and therefore
$\mathbb{E}[S_{\tau}] = 0$,
which is  equivalent to \eqref{stopping}.
\end{proof}
\section*{Example \ref{ex:lip}  (continued)}
Let $B(J)$ denote the ball of radius $J$ centered at the origin in $\reals^q$. We show that
$$
\sup_{y\in B(J), x \in \reals^{d}}
\left\|\frac{\partial^2 f }{\partial x\partial y}(y,x)\right\|
\leq
\sup_{y \in B(J), \|x\|\leq\sqrt{d} }
\left\|\frac{\partial^2 g}{\partial x\partial y}(y,x)\right\|.
$$
Note that the right hand side is finite, as it is the supremum of a continuous function over a compact set. For ease of notation, let $A(u,v)$ denote the result of applying the bilinear map $A$ to the argument $(u,v)$.
For example, if
$\tfrac{\partial^2 f}{\partial x\partial y}(y,x)$
is the mixed-partial of $f$ at $(y,x)$, and
$(u,v) \in \reals^q \times \reals^d$,
then
$\tfrac{\partial^2 f}{\partial x\partial y}(y,x)(u,v)$
is the number
$\sum\limits_{i=1}^{q}
\sum\limits_{j=1}^{d}
\tfrac{\partial^2 f}{\partial x_j\partial y_i}(y,x)u_iv_j
$.
Using this notation, we have
\begin{equation}\label{starter-pack}
  \begin{split}
    \sup_{y \in B(J), x \in \reals^{d}}
    \left\|\frac{\partial^{2} f}{\partial x\partial y}(y,x)\right\|
    &=
    \sup_{y \in B(J), x \in \reals^{d}} \sup_{\|u\|=1,\|v\|=1}
    \left|\frac{\partial^{2} f}{\partial x\partial y}(y,x)(u,v)\right|\\
    &=
    \sup_{y \in B(J), x \in \reals^{d}} \sup_{\|u\|=1,\|v\|=1}
    \left|\frac{\partial^{2} g}{\partial x\partial y}(y,h(x))\left(u,\frac{\partial h}{\partial x}(x)v\right)\right|
  \end{split}
  \end{equation}
  Next, note that for any $x \in \reals$, we have $|\tanh(x)| \leq 1$  and $\tanh'(x) \leq 1$. Therefore  $\|h(x)\| \leq \sqrt{d}$, and
  $\|\frac{\partial h}{\partial x}(x)\| \leq 1$. Continuing from \eqref{starter-pack}, then,
   \begin{align*}
    \sup_{y \in B(J), x \in \reals^{d}}
    \left\|\frac{\partial^{2} f}{\partial x\partial y}(y,x)\right\|
    &\leq
        \sup_{y \in B(J), x \in \reals^{d}}\left\|\frac{\partial h}{\partial x}(x)\right\| 
    \left\|\frac{\partial^{2} g}{\partial x\partial y}(y,h(x))\right\|
    \\&\leq
        \sup_{y \in B(J), x \in \reals^{d}} 
    \left\|\frac{\partial^{2} g}{\partial x\partial y}(y,h(x))\right\|\\
    &\leq
    \sup_{y \in B(J), \|x\|\leq\sqrt{d}}
    \left\|\frac{\partial^{2} g}{\partial x\partial y}(y,x)\right\|. 
  \end{align*}
  
 \section{Analysis of Biased SGD}
 \section*{Proof of Proposition \ref{prop:abstract-bias}}
  \begin{proof}
    For convenience, define the random variables $\delta_t$ for $t=1,2,\hdots$ as
    $\delta_t = v_t - \nabla f_T(x_t)$.
    From \eqref{eqn:taylor}, it holds that
    \begin{align*}
    f_T(x_{t+1})
    &\leq f_T(x_t)
    - \eta\nabla f_T(x_t)^{T}\left( \nabla f_T(x_t) + \delta_t + \Delta_t\right) 
    + \frac{L}{2}\eta^2\|\nabla f_T(x_t) + \delta_t + \Delta_t\|^{2}.
    \end{align*}
    Summing this inequality over $t=1,\hdots,k$ for an arbitrary $k\geq 1$ yields
    \begin{equation}\label{eqn:biased-summer}
      \begin{split}
        f_T(x_{k+1})
    &\leq
    f_T(x_1)
    -
    \eta\sum\limits_{t=1}^{k}\nabla f_T(x_t)^{T}\left( \nabla f_T(x_t) +  \delta_t + \Delta_t\right) 
    +
    \frac{L}{2}\eta^2\sum\limits_{t=1}^{k}\|\nabla f_T(x_t) + \delta_t + \Delta_t\|^{2} \\
    &=
    f_T(x_1)
    -
    \eta\left(1-\frac{L}{2}\eta\right)\sum\limits_{t=1}^{k}\|\nabla f_T(x_t)\|^{2} 
    -
    \eta(1- L\eta)\sum\limits_{t=1}^{k}\nabla f_T(x_t)^{T}\delta_t 
    \\&\quad+
    \frac{L}{2}\eta^2\sum\limits_{t=1}^{k}\|\delta_t\|^{2} 
    -\eta(1-L\eta)\sum\limits_{t=1}^{k}\nabla f_T(x_t)^T\Delta_t
    +\frac{L}{2}\eta^2\sum\limits_{t=1}^{k}\|\Delta_t\|^{2}
    +L\eta^2 \sum\limits_{t=1}^{k} \delta_t^T\Delta_t.
      \end{split}
    \end{equation}
    In general, for any numbers $a,b$ it is the case that 
    $|ab| \leq \frac{1}{2}a^{2} + \frac{1}{2}b^2$.
    Then
    \begin{equation}\label{eqn:hence1}
      |\delta_{t}^{T}\Delta_t| \leq
      \|\delta_{t}\|\|\Delta_t\| \leq \frac{1}{2}\|\delta_t\|^{2} + \frac{1}{2}\|\Delta_t\|^2
      \end{equation}
    and 
    \begin{equation}\label{eqn:hence2}
      \begin{split}
        |\nabla f_T(x_t)^{T}\Delta_t| &\leq \|\nabla f_T(x_t)\|\|\Delta_t\|
        \leq \frac{1}{2}\|\nabla f_T(x_t)\|^2 + \frac{1}{2}\|\Delta_t\|^2.
      \end{split}
    \end{equation}
    Combining  \eqref{eqn:biased-summer}, \eqref{eqn:hence1}, \eqref{eqn:hence2}, and the fact that $\eta \leq 1/L$, we obtain
    \begin{equation*}
      \begin{split}
        f_T(x_{k+1}) &\leq
        f(x_1)
        -
        \frac{\eta}{2}
        \sum\limits_{t=1}^{k}
        \|\nabla f_T(x_t)\|^{2}
        -
    \eta(1- L\eta)\sum\limits_{t=1}^{k}
    \nabla f_T(x_t)^{T}\delta_t
    +
    L\eta^2\sum\limits_{t=1}^{k}
    \|\delta_t\|^{2} 
    +  \frac{\eta}{2}(1 + L\eta)\sum\limits_{t=1}^{k}\|\Delta_t\|^{2}.
      \end{split} 
      \end{equation*}
    \nobreak
    Rearranging terms, while noting that
    $f_T(x_{k+1}) \geq f^{*}$, $\|\Delta_t\|^2 \leq V_t,$ and $\eta\leq 1/L$, then,
    \begin{equation}
      \begin{split}
      \frac{\eta}{2}\sum\limits_{t=1}^{k}
      \|\nabla f_T(x_t)\|^{2}
    &\leq  
    f_T(x_1)
    -
    f^{*}
    -
    \eta(1- L\eta) \sum\limits_{t=1}^{k}
    \nabla f_T(x_t)^{T}\delta_t
    +
    L\eta^2\sum\limits_{t=1}^{k}
    \|\delta_t\|^{2}
    +
    \eta\sum\limits_{t=1}^{k}  V_t.  \label{bias-rearranged}
      \end{split}
      \end{equation}
    For each
    $n\geq 1$
    define
    $\tau(\epsilon) \wedge n$
    to be the stopping time which is the minimum of
    $\tau(\epsilon)$ and the constant $n$.
    Using Proposition \ref{mart-like-thm} with Assumption \ref{asu:vanilla-grad}, it holds that
    \begin{equation}\label{bias-zero}
    \mathbb{E}\left[
      \sum\limits_{t=1}^{\tau(\epsilon) \wedge n}\nabla f_T(x_t)^{T}\delta_t
      \right] = 0
    \end{equation}
    and
    \begin{equation}\label{bias-std}
    \mathbb{E}\left[
      \sum\limits_{t=1}^{\tau(\epsilon) \wedge n}\|\delta_t\|^{2}
      \right]
    \leq
    \sigma_v^2\mathbb{E}[\tau(\epsilon) \wedge n].
    \end{equation}
    Next, according to conditions \eqref{v1-eqn},  and \eqref{vn-eqn}, it holds that for any $k\geq 1$,
    \begin{equation}\label{vcons}
      \sum\limits_{t=1}^{k}V_t \leq \alpha\sum\limits_{t=1}^{k}V_t + \sum\limits_{t=1}^{k}U_t + \beta
    \end{equation}
    and by  \eqref{bn-eqn} together with Proposition \ref{mart-like-thm},
    \begin{equation}\label{vcons2}
      \mathbb{E}\left[\sum\limits_{t=1}^{\tau(\epsilon) \wedge n}U_t\right]
    \leq
    \beta\,\mathbb{E}[\tau(\epsilon)\wedge n].
    \end{equation}
    Combining \eqref{vcons} and \eqref{vcons2}, then 
    $$
    \mathbb{E}\left[
      \sum\limits_{t=1}^{\tau(\epsilon) \wedge n}
      V_t
      \right]
    \leq
    \alpha
    \mathbb{E}\left[
      \sum\limits_{t=1}^{\tau(\epsilon) \wedge n}
      V_t
      \right]
    +
    \beta\left(\mathbb{E}[\tau(\epsilon\wedge n)] + 1\right)
    $$
    which, upon rearranging, results in
    \begin{equation}\label{bias-results}
      \mathbb{E}\left[
        \sum\limits_{t=1}^{\tau(\epsilon)\wedge n}V_t
        \right]
      \leq
      \frac{\beta}{1-\alpha}\left( \mathbb{E}[\tau(\epsilon)\wedge n]+1\right).
    \end{equation}

    Combining
    \eqref{bias-rearranged},
    \eqref{bias-zero},
    \eqref{bias-std},
    \eqref{bias-results} and
    results in
    \begin{equation}\label{amazingly-elite}
      \begin{split}
        \frac{\eta}{2}\mathbb{E}\left[\sum\limits_{t=1}^{\tau(\epsilon) \wedge n}\|\nabla f_T(x_t)\|^2\right] \leq
    f_T(x_1) - f^{*}
    +
    L\eta^2\sigma_v^2\mathbb{E}\left[\tau\left(\epsilon\right) \wedge n\right]
    +
    \eta\frac{\beta}{1-\alpha}\left(\mathbb{E}[\tau(\epsilon) \wedge n]  + 1 \right).
      \end{split}
    \end{equation}
   Next, it follows from  squaring  \eqref{grad-dist} that for all $x \in \reals^d$,
    \begin{equation}\label{squared-wass-bound}
    \|\nabla f_{V}(x)\|^2 \leq 2 G^2d_1(\mu_V,\mu_T)^2 + 2\|\nabla f_{T}(x)\|^2.
    \end{equation}
    and for any $k\geq 1$,
    \begin{equation}\label{noter}
      \frac{k}{m} \leq \sum\limits_{t=1}^{k}1_{t\equiv 1 \Mod m}  \leq \frac{k}{m} + 1.
    \end{equation}
    Combining \eqref{squared-wass-bound} and \eqref{noter} results in
    \begin{equation}\label{mod-sum-ineq}
        \begin{split}
    \sum\limits_{t=1}^{\tau(\epsilon)\wedge n}
    1_{t\equiv 1 \Mod m}\|\nabla f_{V}(x_t)\|^2 
    &\leq 
    2\sum\limits_{t=1}^{\tau(\epsilon)\wedge n}
    1_{t\equiv 1 \Mod m} G^2 d_1(\mu_V,\mu_T)^2 
    + 
    2\sum\limits_{t=1}^{\tau(\epsilon)\wedge n}
    1_{t \equiv 1 \Mod m}\|\nabla f_{T}(x_t)\|^2  \\
    &
    \leq
    2 G^2 d_1(\mu_V,\mu_T)^2 \left(\frac{(\tau(\epsilon) \wedge n)}{m} + 1\right) +
    2 \sum\limits_{t=1}^{\tau(\epsilon)\wedge n}
    \|\nabla f_{T}(x_t)\|^2.
    \end{split}
   \end{equation}
    Furthermore, combining \eqref{noter} with the definition of $\tau$,
    \begin{equation}\label{bias-tau-prop}
      \begin{split}
     \mathbb{E}\left[ \sum\limits_{t=1}^{\tau(\epsilon) \wedge n }
     1_{t\equiv 1 \Mod m}\|\nabla f_V(x_t)\|^{2} \right]
      &\geq
     \mathbb{E}\left[ \sum\limits_{t=1}^{(\tau(\epsilon) \wedge n) -1}
     1_{t\equiv 1 \Mod m}\|\nabla f_V(x_t)\|^{2} \right]\\
      &\geq
     \frac{\epsilon}{m} \mathbb{E}[(\tau(\epsilon) \wedge n) -1].
      \end{split}
    \end{equation}
    Combining \eqref{amazingly-elite}, \eqref{mod-sum-ineq} and \eqref{bias-tau-prop},
    \begin{align*}
    \frac{\eta \epsilon}{4 m}
    \left( \mathbb{E}[\tau(\epsilon) \wedge n] - 1 \right)
    \leq 
    &\frac{\eta}{2} G^2 d_1(\mu_V,\mu_T)^2\left( \frac{\mathbb{E}[\tau(\epsilon) \wedge n]}{m} + 1\right) +    \\&
    f_T(x_1) - f^{*}
    +
    L\eta^2\sigma_v^2\mathbb{E}\left[\tau\left(\epsilon\right) \wedge n\right]
    +
    \frac{\eta\beta}{1-\alpha}\left(\mathbb{E}[\tau(\epsilon) \wedge n]  + 1 \right).
    \end{align*}
    This can be rearranged into
     \begin{equation*}
      \begin{split}
        \left(
        \frac{\eta \epsilon}{2m} - 2 L\eta^2\sigma_v^2 - \frac{ 2\eta\beta}{1-\alpha}
        - \frac{\eta}{ m }G^2 d_1(\mu_V,\mu_T)^2
        \right)
        \mathbb{E}[\tau(\epsilon) \wedge n] 
    &\leq 
    \eta G^2 d_1(\mu_V,\mu_T)^2 \\&+ 
    2 ( f_T(x_1) - f^{*} )
    +
\frac{2 \eta\beta }{1-\alpha} +
    \frac{\eta\epsilon}{2m},
      \end{split}
    \end{equation*}
    which in turn is equivalent to
    \begin{equation}\label{eqn:biased-prebound}
    \mathbb{E}[\tau(\epsilon) \wedge n]
    \leq
    \frac{\eta G^2 d_1(\mu_V,\mu_T)^2 + 2(f_T(x_1) - f^{*}) + \eta\epsilon/(2m)  + 2\eta\beta /(1-\alpha)}
         {\eta\epsilon/(2m) - 2 L\eta^2\sigma_v^2 -2 \eta\beta /(1-\alpha) - \eta G^2d_1(\mu_V,\mu_T)^2/m}.
    \end{equation}
    Note that the sequence of random variables
    $\{ (\tau (\epsilon) \wedge n)\}_{n=1,2,\hdots}$
    is monotone increasing, and converges pointwise to
    $\tau(\epsilon)$.
    Then the claimed bound on the expected time 
     follows from
    \eqref{eqn:biased-prebound} by the monotone convergence theorem.
    
    Using  \eqref{grad-dist}, we see that
    \begin{align*}
      \|\nabla f_{T}(x_{\tau(\epsilon)})\|
      \leq
      \|\nabla f_{V}(x_{\tau(\epsilon)})\| + G d_1(\mu_V,\mu_T).
    \end{align*}
    Using the definition of $\tau(\epsilon)$ and squaring each sides of this equation yields  \eqref{holder-nabla}.
  \end{proof}
  \section*{Proof of Corollary \ref{cor:bias-cvg}}
  
  \begin{proof}
    According to the definition of the step-size $\eta$ \eqref{step-form},
    \begin{equation}\label{denom-lb}
    \begin{split}
    \eta\left[ (\epsilon/2 - G^2 d_1(\mu_V,\mu_T)^2)/m - \eta(2 L\sigma_v^2 + 2 R  /(1-\alpha))\right]
    \geq
    \eta(1-c)(\epsilon/2 - G^2d_1(\mu_V,\mu_T)^2)/m
    \end{split}
    \end{equation}
    and
    \begin{equation}\label{one-over-eta}
    \frac{1}{\eta} 
    \leq 
    \frac{L}{c} + 
    \frac{m(2L\sigma_v^2 + 2 R /(1-\alpha))}
         {c\,( \epsilon/2 - G^2 d_1(\mu_V,\mu_T)^2)}.
    \end{equation}
    Combining these inequalities with  Proposition \ref{prop:abstract-bias}  yields
    \begingroup
    \addtolength{\jot}{1em}
    \begin{equation}\label{starter-eqn-abc}
    \begin{split}
    \mathbb{E}[\tau(\epsilon)]
    &\stackrel{\textbf{A}}{\leq}
    \frac{
      \eta G^2 d_1(\mu_V,\mu_T)^2 + 2(f_T(x_1) - f^{*}) + \eta\epsilon/(2m) + 2\eta \beta /(1-\alpha)
    }
    {\eta
      (\epsilon/2 - G^2 d_1(\mu_V,\mu_T)^2)/m - 2 L\eta^2\sigma_v^2 - 2\eta\beta /(1-\alpha)
    }.
    \\
    &\stackrel{\textbf{B}}{=}
    \frac{
      \eta G^2 d_1(\mu_V,\mu_T)^2 + 2(f_T(x_1) - f^{*}) + \eta\epsilon/(2m) + 2\eta^2 R /(1-\alpha)
    }
         {
           \eta(\epsilon/2 - G^2 d_1(\mu_V,\mu_T)^2)/m- 2 L\eta^2\sigma_v^2 - 2\eta^2 R /(1-\alpha)
         }.
    \\   
    &\stackrel{\textbf{C}}{\leq} 
    \frac{
      \eta G^2 d_1(\mu_V,\mu_T)^2  + 2(f_T(x_1) - f^{*}) + \eta\epsilon/(2m) + 2\eta^2 R /(1-\alpha)
    }
         {\eta(1-c)\,(\epsilon/2 - G^2 d_1(\mu_V,\mu_T)^2)/m}.
    \end{split}
    \end{equation}
    \endgroup
        Step \textbf{A} was established by Proposition \ref{prop:abstract-bias}, step \textbf{B} uses the assumption that $\beta = \eta R$, and step \textbf{C} is an application of \eqref{denom-lb}. Next, we will upper-bound the final inequality in three steps. First, using  \eqref{one-over-eta}, we see that
            \begingroup
    \addtolength{\jot}{1em}
    \begin{equation}\label{abc-1}
    \begin{split}
    &       \frac{ 2(f_T(x_1) - f^{*})}
         {\eta(1-c)\,(\epsilon/2 - G^2 d_1(\mu_V, \mu_T)^2)/m} 
    \leq 
    \\&\quad\quad\quad\quad
        \frac{2L m(f_T(x_1) - f^*)}{(1-c)\,c\, (\epsilon/2 - G^2 d_1(\mu_V,\mu_T)^2)}
        +
         \frac{2 m^2 (f_T(x_1) - f^{*})\left( 2L\sigma_v^2 + 2 R /(1-\alpha)\right)}{(1-c)\,c\,(\epsilon/2 - G^2 d_1(\mu_V,\mu_T)^2)^2}.
    \end{split}
    \end{equation}
  \endgroup
  Next, using the assumption on $\eta$, we have
      \begingroup
    \addtolength{\jot}{1em}
    \begin{equation}\label{abc-2}
        \begin{split}
        \frac{2 \eta^2 R /(1-\alpha)}
         {\eta (1-c)\,(\epsilon/2 - G^2 d_1(\mu_V,\mu_T)^2)/m} &=
          \frac{2 \eta R /(1-\alpha)}
         { (1-c)\,(\epsilon/2 - G^2 d_1(\mu_V,\mu_T)^2)/m} \\
         &
         \leq
           \frac{2  R /(1-\alpha)}
         {(1-c)\,(\epsilon/2 - G^2 d_1(\mu_V,\mu_T)^2)/m} \times \frac{ c\left( \epsilon/2 - G^2 d_1(\mu_V,\mu_T)^2\right)}{m ( 2 L \sigma_v^2 + 2 R / ( 1 - \alpha )) } \\
         &
         =
                \frac{c }
         {(1-c)}\times \frac{ 2 R /(1-\alpha)}{( 2 L \sigma_v^2 + 2 R / ( 1 - \alpha )) } 
         \leq
             \frac{c}
         {(1-c)}.
         \end{split}
    \end{equation}
    \endgroup
    Finally,
        \begingroup
    \addtolength{\jot}{1em}
    \begin{equation}\label{abc-3}
    \begin{split}
         \frac{\eta G^2 d_1(\mu_V,\mu_T)^2 + \eta\epsilon/(2m) }
         {\eta(1-c)\,(\epsilon/2 - G^2 d_1(\mu_V,\mu_T)^2)/m} &=
         \frac{G^2 d_1(\mu_V,\mu_T)^2 + \epsilon/(2m) }
         {(1-c)\,(\epsilon/2 - G^2 d_1(\mu_V,\mu_T)^2)/m}\\
         &
         =
         \frac{ m c G^2 d_1(\mu_V,\mu_T)^2 + c \epsilon/2}{(1-c)c(\epsilon/2 - G^2 d_1(\mu_V,\mu_T)^2)}.
    \end{split}
    \end{equation}
    \endgroup
    Above, the first step involved removing a common factor of $\eta$, and in the second step the result is multiplied by $(mc)/(mc)$. 
    Combining
    \eqref{starter-eqn-abc} with
    \eqref{abc-1},
    \eqref{abc-2}, and
    \eqref{abc-3}, we find that
        \begingroup
    \addtolength{\jot}{1em}
    \begin{equation}\label{bias-conc}
    \begin{split}
        \mathbb{E}[\tau(\epsilon)] &\leq
              \frac{4 m^2 (f_T(x_1) - f^{*})\left(L\sigma_v^2 +  R /(1-\alpha)\right)}{(1-c)\,c\,(\epsilon/2 - G^2 d_1(\mu_V,\mu_T)^2)^2}
              \\&+
           \frac{ 2 L m(f_T(x_1) - f^*) + m c G^2 d_1(\mu_V,\mu_T)^2 + c\epsilon/2}{(1-c)\,c\, (\epsilon/2 - G^2 d_1(\mu_V,\mu_T)^2)} + \frac{c}{1-c}.
    \end{split}
    \end{equation}
    \endgroup
  \end{proof}
  
  \section*{Proof of Corollary \ref{sgd-ifo}}
  
  \begin{proof}
  If the algorithm runs until iteration $\tau(\epsilon)$, then the number of times that the full gradient of $f_{V}$ is calculated is $\lceil\tau(\epsilon)/m\rceil \leq \tau(\epsilon)/m + 1$, and the number of IFO calls for the training function is $\tau(\epsilon)-1$. Therefore
 
    \begin{equation}\label{ifo-tau-ineq}
    \mathrm{IFO}(\epsilon) 
    \leq 
    \left(\frac{\tau(\epsilon)}{m} + 1 \right)n_V + (\tau(\epsilon)-1) 
    \leq 
    \tau(\epsilon)\left( \frac{n_V}{m} + 1\right) + n_V.
    \end{equation}
  Note that under our assumption on the gradient estimates $v_t$, we are in the unbiased setting where $R=0$. 
  Combining \eqref{bias-conc}  and \eqref{ifo-tau-ineq}, we obtain
  \begin{equation*}
      \begin{split}
        \mathbb{E}[\mathrm{IFO}(\epsilon)] 
        \leq 
            &\Bigg(\frac{4 m^2 (f_T(x_1) - f^{*})L\sigma_v^2 }{(1-c)\,c\,(\epsilon/2 - G^2 d_1(\mu_V,\mu_T)^2)^2}
            +
           \frac{2 Lm(f(x_1) - f^*) + m c G^2 d_1(\mu_V,\mu_T)^2 + c\epsilon/2}{(1-c)\,c\, (\epsilon/2 - G^2 d_1(\mu_V,\mu_T)^2)} + \frac{c}{1-c}\Bigg)
           \\&
           \times \left(\frac{n_V}{m} + 1\right) + n_V.
        \end{split}
  \end{equation*}
  Using $c=1/2$ and neglecting terms of lower order in $\epsilon$, then,
  \begin{equation}\label{neglector}
    \begin{split}
   \mathbb{E}[\mathrm{IFO}(\delta)]
   &= \mc{O}\left(
   \frac{
    m n_V + m^2
   }
   {(\epsilon -  2 G^2 d_1(\mu_V,\mu_T)^2)^2}
   + n_V\right) \\
      \end{split}
    \end{equation}
\end{proof}
  \section{Analysis of Decentralized SGD}
  The following result is a restatement of Lemma 5 of \citep{decentralized}.
  \begin{lem}\label{lemma-d1}
    Let Assumption \ref{asu:connectivity} hold. Then the limit $\lim_{k\to\infty}a^{k}$, which we denote $a^{\infty}$, is well defined and this matrix has entries
    $a^{\infty}_{i,j} = \frac{1}{M}$ for
    $1\leq i,j\leq M$.
    Furthermore, for all $k \geq 1$ it holds that $\|a^{\infty} - a^k\|^2 \leq \rho^k$.
  \end{lem}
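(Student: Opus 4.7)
The plan is to use the spectral decomposition of $a$. Since $a$ is symmetric and row-stochastic, taking transposes shows it is also column-stochastic, hence doubly stochastic. So the all-ones vector $\mathbf{1} \in \reals^M$ satisfies $a\mathbf{1}=\mathbf{1}$, which means $u_1 := \mathbf{1}/\sqrt{M}$ is a unit eigenvector with eigenvalue $\lambda_1(a)=1$. By the spectral theorem I would extend $u_1$ to an orthonormal eigenbasis $u_1,\ldots,u_M$ with corresponding eigenvalues $\lambda_1 \geq \lambda_2 \geq \cdots \geq \lambda_M$. The definition $\rho=\max_{i\geq 2}|\lambda_i(a)|^2$ together with $\rho<1$ gives $|\lambda_i|\leq \sqrt{\rho}<1$ for $i\geq 2$.

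Next I would expand the powers via the spectral decomposition, $a^k=\sum_{i=1}^{M}\lambda_i^k u_i u_i^T$, and peel off the leading term: the $i=1$ contribution is $u_1 u_1^T=\tfrac{1}{M}\mathbf{1}\mathbf{1}^T$, whose entries are all $1/M$. Since $|\lambda_i|^k\to 0$ for $i\geq 2$, each of the remaining rank-one terms vanishes in norm as $k\to\infty$, establishing that the limit $a^\infty$ exists and equals $\tfrac{1}{M}\mathbf{1}\mathbf{1}^T$ with the claimed entries.

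For the quantitative bound I would write $a^\infty - a^k = -\sum_{i=2}^{M}\lambda_i^k u_i u_i^T$, which is a symmetric matrix whose eigenvalues are $-\lambda_i^k$ for $i\geq 2$ together with $0$ on the span of $u_1$. Its spectral norm is therefore $\max_{i\geq 2}|\lambda_i|^k \leq (\sqrt{\rho})^k = \rho^{k/2}$, and squaring gives $\|a^\infty-a^k\|^2 \leq \rho^k$.

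There is no real obstacle here; the argument is a standard linear-algebra exercise. The only points needing care are verifying that symmetry upgrades row-stochasticity to double-stochasticity (so that $\mathbf{1}$ is genuinely an eigenvector and the limit has the stated constant entries), and tracking the squaring convention in the definition of $\rho$, which is what forces the factor of $\sqrt{\rho}$ in the intermediate step and yields exactly $\rho^k$ after squaring the norm.
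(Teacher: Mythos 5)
Your proof is correct. Note that the paper does not prove this lemma itself: it presents it as a restatement of Lemma 5 of \citep{decentralized} and defers to that reference, so there is no internal proof to compare against; your spectral argument is precisely the standard derivation underlying the cited result. The two points you flag as needing care are indeed the right ones, and one more is worth making explicit: the hypothesis $\rho<1$ (equivalently $|\lambda_i|\leq\sqrt{\rho}<1$ for $i\geq 2$) is what guarantees that the eigenvalue $1$ is simple, so that the $1$-eigenspace is exactly the span of $\mathbf{1}$ and the limit $a^{\infty}=\frac{1}{M}\mathbf{1}\mathbf{1}^{T}$ is forced; it also rules out $-1$ as an eigenvalue, without which $a^{k}$ need not converge at all. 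Your identification of the norm as the spectral norm is consistent with how the paper uses the lemma (in the proof of Proposition \ref{decentralized-dispersion}, via $\|(a-a^{\infty})\otimes I_d\|\leq\sqrt{\rho}$), so the bound $\|a^{\infty}-a^{k}\|\leq\rho^{k/2}$, squared, gives exactly the stated inequality.
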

  \section*{Proof of Proposition \ref{decentralized-dispersion}}
  \begin{proof}
    For $t\geq 1$ define
    $r_t$
    and $z_t$ to be the $(Md)$-dimensional vectors
    $r_{t} = \left( r_{t}^{1}, \hdots, r_{t}^{M} \right)$ and $z_{t} = \left( z_{t}^{1}, \hdots, z_{t}^{M} \right)$  respectively, 
  where, for $1\leq i \leq M$, the components $r_{t}^{i}, z_{t}^{i}$ are the $d$-dimensional vectors given by
  \begin{align}
    r_{t}^{i} &= x_{t}^{i} - \overline{x}_t, \label{yt-def}  \\
    z_{t}^{i} &= v_{t}^{i} - \frac{1}{M}\sum\limits_{j=1}^{M}v_{t}^j.    \label{zt-def}
  \end{align}
  Then we may express the variables $V_t$ as 
$$V_{t} = \frac{L^2}{M}\|r_t\|^2$$ 
  Let $a^{\infty}$ be the $M\times M$ matrix with entries
  $a^{\infty}_{i,j} = \frac{1}{M}.$ 
  Given matrices $A$ and $B$, we let $A \otimes B$ denote their Kronecker product.
  Then according to Line 5 of Algorithm \ref{algodgd}, the variables $r_{t}$ satisfy the recursion
  \begin{equation*}\label{y-eqn-decen-pr}
    r_{t+1} = \left((a - a^{\infty})\otimes I_{d}\right)r_t + \eta z_t.
  \end{equation*}
  Note that when $\|\cdot\|$ denotes the spectral norm on matrices, the Kronecker product satisfies $\|A\otimes B\| \leq \|A\|\|B\|$. Therefore, according to Lemma \ref{lemma-d1},
  \begin{equation}\label{y-eqn-decen}
    \|r_{t+1}\| \leq \sqrt{\rho}\|r_t\| + \eta \|z_t\|.
  \end{equation}
  Note that each $z^i_t$ can be expressed as
  \begin{equation}\begin{split}
      z^i_{t}   &= \nabla f(x_t^i) - \nabla f(\overline{x}_t)
      +
      v_t^i - \nabla f(x_t^i) -
      \frac{1}{M}\sum\limits_{j=1}^{M}(v_{t}^{j} - \nabla f(x_t^j))
      -
      \frac{1}{M}\sum\limits_{j=1}^{M}( \nabla f(x_t^j) - \nabla f(\overline x_t))
    \end{split}
  \end{equation}
Using the Lipschitz property of the gradient (Assumption \ref{asu:lipgrad}) then,
\begin{equation}\label{to-b-sqr}
 \|z^i_{t}\|
  \leq
  L\| x_t^i - \overline{x}_t \| 
  +
  \|v_t^i - \nabla f(x_t^i)\| +
  \frac{1}{M}\sum\limits_{j=1}^{M}
  \|v_{t}^{j} - \nabla f(x_t^j)\|
  +\frac{L}{M}\sum\limits_{j=1}^{M}\| x_t^j - \overline x_t \|.
  \end{equation}
Squaring and summing \eqref{to-b-sqr} over $i=1,\hdots,M$,
\begin{align*}
  \sum\limits_{i=1}^{M}\|z_t^i\|^2
  &\leq
  \sum\limits_{i=1}^{M}\left( L\| x_t^i - \overline{x}_t \| 
  +
  \|v_t^i - \nabla f(x_t^i)\| +
  \frac{1}{M}\sum\limits_{j=1}^{M}\|v_{t}^{j} - \nabla f(x_t^j)\|
  +\frac{L}{M}\sum\limits_{j=1}^{M}\| x_t^j - \overline x_t \|
  \right)^2 \\
    &\leq
  L^2 4\sum\limits_{i=1}^{M}\| x_t^i - \overline{x}_t \|^2 
  +
  4\sum\limits_{i=1}^{M}\|v_t^i - \nabla f(x_t^i)\|^2 \\&\quad
  +
  \frac{4}{M}\sum\limits_{i=1}^{M}\sum\limits_{j=1}^{M} \|v_{t}^{j} - \nabla f(x_t^j) \|^2
  +\frac{4 L^2}{M}\sum\limits_{i=1}^{M}\sum\limits_{j=1}^{M} \| x_t^j - \overline x_t \|^2 \\
  &=
  L^{2}8 \|r_t\|^2 +  8\sum\limits_{i=1}^{M} \|v_t^i - \nabla f(x_t^i)\|^2. 
\end{align*}
Taking square roots on each sides of this equation yields
\begin{equation}\label{cool-sqrt}
  \begin{split}
    \|z_t\|
    &\leq \sqrt{ L^{2}8 \|r_t\|^2 +  8\sum\limits_{i=1}^{M} \|v_t^i - \nabla f(x_t^i)\|^2 } 
    \leq  L\sqrt{ 8 } \|r_t\| +  \sqrt{8\sum\limits_{i=1}^{M} \|v_t^i - \nabla f(x_t^i)\|^2 }.
  \end{split}
  \end{equation}
Combining \eqref{y-eqn-decen} and \eqref{cool-sqrt}, then, 
\begin{align*}
\|r_{t+1}\|
&\leq \left(
\sqrt{\rho}
+
\eta L\sqrt{8}\right)\|r_t\| + \eta
\sqrt{8\sum\limits_{i=1}^{M} \|v_t^i - \nabla f(x_t^i)\|^2}.
\end{align*}
Squaring this equation, for any $k_1>0$ it holds that
\begin{equation}\label{multied}
\|r_{t+1}\|^2
\leq
(1+k_1)\left( \sqrt{\rho} + \eta L\sqrt{8}\right)^2\|r_t\|^2
+
8\eta^2 \left( 1 + \frac{1}{k_1} \right) \sum\limits_{i=1}^{M} \|v_t^i - \nabla f(x_t^i)\|^2.
\end{equation}
Define $k_1$ to be
$$ k_1 = \left( \frac{3+\sqrt{\rho}}{1+\sqrt{\rho}} \right)^{2}\frac{1}{4} - 1 $$
Then
$$
1 + \frac{1}{k_1}
=
\frac{9 + 6\sqrt{\rho} + \rho}{5 - 2\sqrt{\rho} - 3\rho}
\leq
\frac{16}{5 - 5\sqrt{\rho}}
\leq \frac{4}{1-\sqrt{\rho}}
$$
Multiplying each side of \eqref{multied} by $L^{2}/M$, 
it follows that
\begin{equation}\label{v-bd}
  \begin{split}
  V_{t+1}
  &\leq
  \left( \frac{3+\sqrt{\rho}}{1+\sqrt{\rho}} \right)^{2}
  \frac{1}{4}
  \left(\sqrt{\rho} + \eta L \sqrt{8}\right)^2 V_t
  +
  \frac{32\, \eta^2\, L^2}{M(1-\sqrt{\rho})}
  \sum\limits_{i=1}^{M}\|v_{t}^i - \nabla f(x_t^i)\|^2 \\
  &=
  \left( \frac{3+\sqrt{\rho}}{1+\sqrt{\rho}} \right)^{2}
  \frac{1}{4}
  \left(\sqrt{\rho} + \eta L \sqrt{8}\right)^2 V_t + U_t,
  \end{split}
\end{equation}
Using the assumption on $\eta$ \eqref{decentra-eta-cond}, it holds that
\begin{equation}\label{eta-holder}
  \begin{split}
    \sqrt{\rho} + \eta L \sqrt{8}
    &\leq \sqrt{\rho} + L \sqrt{8} \frac{1-\sqrt{\rho}}{4 L\sqrt{2}}
    = \frac{1+\sqrt{\rho}}{2}.
  \end{split}
  \end{equation}
Combining \eqref{v-bd} and \eqref{eta-holder}, then
\begin{align*}
  V_{t+1}
  &\leq
  \left(
  \frac{3+\sqrt{\rho}}
       {1+\sqrt{\rho}}
  \right)^2\frac{1}{4}
  \left(
  \frac{1+\sqrt{\rho}}
       {2}\right)^2
  V_t
  +
  U_{t} \\
  &=
  \frac{(3+\sqrt{\rho})^2}
       {16}
  V_t
  +
  U_{t}  \\
  &=
  \alpha V_t + U_{t},  
\end{align*}
It follows from the variance bound in Assumption \ref{asu:decentralized} that 
\begin{equation}\label{u-bd}
\mathbb{E}\left[U_{t} \mid \mc{F}_{t-1}\right]
\leq
\frac{32\,\eta^2\,L^2}{1-\sqrt{\rho}} \sigma^2_v.
\end{equation}
Combining \eqref{u-bd} with 
$
\eta
\leq
\frac{1-\sqrt{\rho}}
     {4 L\sqrt{2}}
     \leq
     \frac{1}{4L},
$
then
\begin{align*}
\mathbb{E}\left[ U_{t} \mid \mc{F}_{t-1} \right]
&\leq
\eta \frac{8   L \sigma_v^2}{1-\sqrt{\rho}}   
=
\beta.
\end{align*}
\end{proof}

  \section*{Proof of Proposition \ref{decentralized-main}}
  \begin{proof}
  To begin, note that the system average $\overline{x}_t$ satisfies the recursion
\begin{equation}\label{decentra-rec}
\overline{x}_{t+1} = \overline{x}_t + \frac{\eta}{M}\sum\limits_{i=1}^{M}v_{t}^{i}.
\end{equation}
Define the variables $v_t$ and $\Delta_t$, for $t\geq 1$, as
\begin{align*}
v_{t}
&=
\nabla f(\overline{x}_t)
+
\frac{1}{M}\sum\limits_{i=1}^{M}\left( v_t^i - \nabla f(x_t^i)\right) \\
\Delta_t &=
\frac{1}{M}\sum\limits_{i=1}^{M}
\left( \nabla f( x_t^i ) - \nabla f( \overline{x}_t ) \right)
\end{align*}
Then we can express the recursion \eqref{decentra-rec} as
$$ \overline{x}_{t+1} = \eta\left( v_{t} + \Delta_t\right) $$
We will show that this can be interpreted as a form of biased SGD and therefore we may apply Corollary \ref{cor:bias-cvg}.
For the unbiased component $v_t$, observe that
\begin{equation}\label{wow-unbiased}
  \mathbb{E}\left[ v_t - \nabla f_T(x_t) \mid \mc{F}_{t-1} \right]
  =
  \mathbb{E}\left[
    \frac{1}{M}
    \sum\limits_{i=1}^{M}
    (v_{t}^{i} - \nabla f(x_{t}^i))
\, \middle|\, \mc{F}_{t-1}  \right] = 0 
\end{equation}
and
\begin{equation}\label{wow-var}
  \mathbb{E}\left[ \|v_t - \nabla f_{T}(x_t)  \|^2 \mid \mc{F}_{t-1} \right] \leq
  \mathbb{E}\left[\frac{1}{M}\sum\limits_{i=1}^{M}\|v_{t}^{i} - \nabla f(x_{t}^i)\|^2\, \middle| \, \mc{F}_{t-1} \right] = \sigma_v^2
\end{equation}
For the bias term, note that
\begin{align*}
  \|\Delta_t\|^{2} \leq \frac{L^2}{M}\sum\limits_{i=1}^{M}\|x_{t}^i - \overline{x}_t\|^{2} = V_t
\end{align*}
Assumption \ref{asu:vanilla-grad}  follows from \eqref{wow-unbiased} and \eqref{wow-var}, while Assumption \ref{asu:bias-grad} follows from Proposition  \ref{decentralized-dispersion}. According to Corollary \ref{cor:bias-cvg}, then, a step-size of
\begin{equation}\label{step-dec}
\eta
=
c \cdot \min
\left\{
\frac{1}{L},
\frac{\epsilon/2 - G^2 d_{1}(\mu_v,\mu_T)^2}{m( 2L \sigma_v^2 + 2 R/(1-\alpha))}
  \right\}
\end{equation}
leads to
    \begingroup
    \addtolength{\jot}{1em}
    \begin{equation}\label{bias-conc-dec}
    \begin{split}
        \mathbb{E}[\tau(\epsilon)] &\leq
              \frac{4 m^2 (f_T(x_1) - f^{*})\left(L\sigma_v^2 +  R/(1-\alpha)\right)}{(1-c)\,c\,(\epsilon/2 - G^2 d_1(\mu_V,\mu_T)^2)^2}
              \\&+
              \frac{ 2 L m(f_T(x_1) - f^*) + m c G^2 d_1(\mu_V,\mu_T)^2 + c\epsilon}
                   {(1-c)\,c\, (\epsilon/2 - G^2 d_1(\mu_V,\mu_T)^2)}
                   +
                   \frac{c}{1-c}.
    \end{split}
    \end{equation}
    \endgroup
    In the present case,
    $R=8 L \sigma_v^2/(1-\sqrt{\rho})$
    and $1-\alpha = (7-6\sqrt{\rho} - \rho)/16$, so
    \begin{equation}\label{to-b-combined}
    \frac{R}{1-\alpha}
    =
    \frac{128 L \sigma_v^2}{(1-\sqrt{\rho})(7 - 6\sqrt{\rho} -\rho)}
    =
    \frac{128 L \sigma_v^2}{7 + 5\rho + \rho^{3/2} - 13\sqrt{\rho} }
    \end{equation}
    Combining \eqref{step-dec} with \eqref{to-b-combined} we arrive at  the definition of $\eta$ given in the statement of the proposition. 
    Furthermore,
    $$(1-\sqrt{\rho})(7 - 6\rho - \rho) \geq 7(1-\sqrt{\rho})(1-\sqrt{\rho})$$ so
    \begin{equation}\label{better-r-ineq}
      \frac{R}{1 - \alpha} \leq \frac{128 L \sigma_v^2}{7(1-\sqrt{\rho})^2}
    \end{equation}
    Combining \eqref{bias-conc-dec} with \eqref{better-r-ineq} we arrive at the claimed bound on $\mathbb{E}[\tau(\epsilon)]$.

    Finally, the condition  $c \leq \frac{1-\sqrt{\rho}}{4\sqrt{2}}$ is imposed to guarantee condition \eqref{decentra-eta-cond}.
  \end{proof}
  
  \section*{Proof of Corollary \ref{cor:decentralized-ifo}}
  \begin{proof}
    If DSGD runs until iteration
    $\tau(\epsilon)$,
    then number of times that the full gradient of $f_V$ is calculated is
    $\lceil \tau(\epsilon)/m\rceil \leq \tau(\epsilon)/m + 1$,
    and the number of IFO calls for the training function is $(\tau(\epsilon) - 1)M$.
    Therefore
    \begin{equation}\label{stacked-ifo-starter}
    \mathrm{IFO}(\epsilon) 
    \leq 
    \left( \frac{\tau(\epsilon)}{m} + 1\right)n_V + (\tau(\epsilon) -1)M
    \leq
    \tau(\epsilon)\left( \frac{n_V}{m} + M \right) + n_V.
    \end{equation}
    Next, note that
    $
    (1-c)c
    =
    (1-\sqrt{\rho})(4\sqrt{2} - 1 + \sqrt{\rho})/32 \geq (1-\sqrt{\rho})\sqrt{\rho}/32
    $,
    which implies
    \begin{equation}\label{c-eqn}
        \frac{1}{(1-c)c} \leq \frac{32}{(1-\sqrt{\rho})\sqrt{\rho}}.
    \end{equation}
    Combining \eqref{bias-conc-dec}, \eqref{stacked-ifo-starter}, and \eqref{c-eqn} we see that
    \begin{equation}\label{what-we-see-dsgd}
    \mathbb{E}\left[\mathrm{IFO}(\epsilon)\right] 
    =
    \mc{O}\left(
    \frac{m (n_V + m M)
    }{(1-\sqrt{\rho})^3\sqrt{\rho}(\epsilon - 2 G^2 d_1(\mu_V,\mu_T)^2)^2}+ n_V
    \right).\end{equation}

  \end{proof}

\section{Analysis of SVRG}

      For the analysis of SVRG, define the filtration $\{\mc{F}_{t}\}_{t=0,1,\hdots}$  as follows.
    $\mc{F}_{0} = \sigma( x_{m}^{1})$ and for all $s \geq 1$,
    $$\mc{F}_{s}
      =
      \sigma\left( \left\{ x_{m}^{1}\right\} \cup \left\{ y_{t}^{j} \, \middle|\, 0\leq t \leq m-1, \, 1 \leq j \leq s \right\} \right).$$
      
    We will leverage prior results concerning the behavior of SVRG. The following is adapted from  \citep{reddi2016stochastic}. It concerns conditions that guarantee expected descent of the objective function after each epoch.
  \begin{prop}\label{prop:svrg-prior}
     Let Assumptions \ref{asu:lipgrad} and \ref{asu:finite-sum-fns} hold. Let $\beta>0$ and
    define the constants $c_m,c_{m-1},\dots,c_0$ as follows:  $c_m=0$, and for $0 \leq t \leq m-1$, let
    $c_t = c_{t+1}(1+\eta \beta + 2 \eta^2 L^2) + \eta^2 L^3$.
    Define
    $\Gamma_t$ for $0\leq t \leq m-1$ as
    $\Gamma_t = \eta - \frac{c_{t+1} \eta}{\beta} - \eta^2 L - 2c_{t+1}\eta^2$.
    Suppose that the step-size
    $\eta$
    and the analysis constant $\beta$ are chosen so that
    $\Gamma_{t} >0 $ for $0\leq t\leq m-1$, and set $\gamma = \inf_{0\leq t < m} \Gamma_t$.
    Then for all $s\geq 1$,
    \begin{equation}\label{from-svrg-1}
      \begin{split}
        &\sum\limits_{t=0}^{m-1}\mathbb{E}[\|\nabla f_T(x_{t}^{s+1})\|^{2}\mid\mc{F}_{s-1}]
      \leq
      \frac{ 
        f_T(x_m^{s}) - \mathbb{E}[f_T(x_m^{s+1}) \mid \mc{F}_{s-1}]
      }{\gamma}.
      \end{split}
      \end{equation}
    Furthermore, if $\eta$ is of the form
    $\eta = \xi/(L n^{2/3})$
    for some $\xi \in (0,1)$ and if the epoch length is set to
    $m = \lfloor n/(3\xi) \rfloor,$
    then there is a value for $\beta$ such that
    \(
      \gamma \geq \frac{\nu(\xi)}{Ln^{2/3}} 
    \)
    where $\nu(\xi)$ is a constant dependent only on $\xi$. In particular, if $\xi = 1/4$  then
    \begin{equation}\label{from-svrg-3}
      \gamma \geq \frac{1}{ 40 L n^{2/3}}.
      \end{equation}
  \end{prop}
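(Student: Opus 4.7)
My plan is to follow the standard nonconvex SVRG analysis of \citep{reddi2016stochastic}, adapting it to the notation in Algorithm \ref{algo:svrg}. The core idea is to design a Lyapunov function of the form
\[
R_{t}^{s+1} = \mathbb{E}\bigl[f_T(x_{t}^{s+1}) + c_{t}\|x_{t}^{s+1} - x_{0}^{s+1}\|^{2}\,\bigm|\,\mc{F}_{s-1}\bigr],
\]
where the coefficients $c_{t}$ satisfy the recursion stated in the proposition with terminal condition $c_{m}=0$, and then show that $R_{t}^{s+1} - R_{t+1}^{s+1} \geq \gamma\,\mathbb{E}[\|\nabla f_{T}(x_{t}^{s+1})\|^{2}\mid\mc{F}_{s-1}]$ for each $0 \leq t < m$. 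Telescoping this inequality over $t=0,\dots,m-1$ and using $c_{m}=0$ together with $x_{0}^{s+1} = x_{m}^{s}$ (Line 3 of Algorithm \ref{algo:svrg}) immediately yields \eqref{from-svrg-1}.

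To establish the one-step inequality I would first apply the quadratic bound \eqref{eqn:taylor} to the update $x_{t+1}^{s+1} = x_{t}^{s+1} - \eta v_{t}^{s}$, then take conditional expectation given $\mc{F}_{s-1}$ and the iterates up to time $t$. Using unbiasedness of the SVRG estimator, $\mathbb{E}[v_{t}^{s}\mid\cdot] = \nabla f_{T}(x_{t}^{s+1})$, and the standard variance bound
\[
\mathbb{E}\bigl[\|v_{t}^{s} - \nabla f_{T}(x_{t}^{s+1})\|^{2}\bigm|\cdot\bigr] \leq L^{2}\|x_{t}^{s+1} - x_{0}^{s+1}\|^{2},
\]
which follows from Assumption \ref{asu:lipgrad} and the definition of $v_{t}^{s}$ in Line 8, controls the $\mathbb{E}[\|x_{t+1}^{s+1}-x_{t}^{s+1}\|^{2}]$ term. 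Next I would expand $\|x_{t+1}^{s+1}-x_{0}^{s+1}\|^{2}$ to update the Lyapunov ``distance'' piece, handling the cross term $-2\eta(x_{t}^{s+1}-x_{0}^{s+1})^{T}v_{t}^{s}$ via Young's inequality $2ab \leq \beta a^{2} + b^{2}/\beta$ with the same $\beta>0$ that appears in the recursion for $c_{t}$. Collecting terms produces exactly the recursion $c_{t} = c_{t+1}(1+\eta\beta + 2\eta^{2}L^{2}) + \eta^{2}L^{3}$ as the coefficient in front of $\|x_{t}^{s+1}-x_{0}^{s+1}\|^{2}$, and leaves a coefficient $\Gamma_{t} = \eta - c_{t+1}\eta/\beta - \eta^{2}L - 2 c_{t+1}\eta^{2}$ in front of $\|\nabla f_{T}(x_{t}^{s+1})\|^{2}$, giving the desired per-step drop with rate at least $\gamma = \inf_{t}\Gamma_{t}$.

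For the second part, with $\eta = \xi/(L n^{2/3})$ and $m = \lfloor n/(3\xi)\rfloor$, the idea is to choose $\beta = L/n^{1/3}$ so that $\eta\beta = \xi/n$ and $2\eta^{2}L^{2} = 2\xi^{2}/n^{4/3}$, making the multiplicative factor $\theta := 1 + \eta\beta + 2\eta^{2}L^{2}$ of order $1 + \mathcal{O}(1/n)$. Solving the recursion $c_{t} = c_{t+1}\theta + \eta^{2}L^{3}$ backwards from $c_{m}=0$ gives $c_{0} = \eta^{2}L^{3}(\theta^{m}-1)/(\theta-1)$, and since $\theta^{m} \leq \exp(m(\eta\beta+2\eta^{2}L^{2})) = \exp(1/3 + \mathcal{O}(1/n^{1/3}))$ is bounded by a constant, one obtains a constant upper bound on $c_{t}/L$. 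Substituting into the definition of $\Gamma_{t}$ yields $\Gamma_{t} \geq \eta\cdot\nu(\xi)$ for a constant $\nu(\xi) > 0$, and $\eta = \xi/(Ln^{2/3})$ gives $\gamma \geq \nu(\xi)/(Ln^{2/3})$. The specific numerical inequality $\gamma \geq 1/(40Ln^{2/3})$ at $\xi = 1/4$ reduces to a routine calculation with explicit constants.

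The main obstacle is the bookkeeping in the second part: one must verify that $c_{t+1}/\beta$ stays small enough relative to $\eta$ so that $\Gamma_{t}$ is uniformly bounded away from zero across all $t \in \{0,\dots,m-1\}$, and then extract the clean $1/40$ constant at $\xi=1/4$. Since this is essentially a restatement of Theorem 2 and Corollary 3 of \citep{reddi2016stochastic}, and the only difference from the original setting is cosmetic (the epoch index notation and the fact that we run indefinitely rather than for a fixed number of epochs, which does not affect the per-epoch analysis), I would cite the precise numerical bound from those references rather than redo the arithmetic.
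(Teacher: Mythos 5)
Your proposal is correct and takes essentially the same route as the paper: the paper's proof of \eqref{from-svrg-1} and \eqref{from-svrg-3} consists of citing the Lyapunov analysis of \citep{reddi2016stochastic} (Equation (10) in their Section B and the proof of their Theorem 3), noting only that conditional expectations given $\mc{F}_{s-1}$ replace unconditional expectations --- which is precisely the observation you make and the argument you sketch. Your reconstruction of that cited proof (the Lyapunov function $f_T(x_t^{s+1}) + c_t\|x_t^{s+1}-x_0^{s+1}\|^2$, the Young-inequality step with parameter $\beta$, the variance bound $L^2\|x_t^{s+1}-x_0^{s+1}\|^2$, and the bound $\theta^m \leq \exp(1/3 + \mathcal{O}(n^{-1/3}))$ yielding $\gamma \geq \nu(\xi)/(Ln^{2/3})$) is faithful to the original and correctly notes that running indefinitely does not affect the per-epoch inequality.
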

  \begin{proof}
    The proof of \eqref{from-svrg-1} follows from  nearly the same reasoning used to establish Equation (10) in (Section B, \citep{reddi2016stochastic}), the only difference being that conditional expectations replace expectations in all of the relevant formulas.

    Formula \eqref{from-svrg-3} follows from the proof of Theorem 3 given in (Appendix B, \citep{reddi2016stochastic}).
    \end{proof}
    \section*{Proof of Proposition \ref{prop:svrg-conv}}

  \begin{proof}
    First, note that $\tau(\epsilon)$ is a well-defined stopping time with respect to the filtration $\{\mc{F}_s\}_{s=0,1,\hdots}$.
    For $s=1,2,\hdots$
    define the random variables $\delta_{s}$ as
    \[
    \delta_{s} = \sum\limits_{t=0}^{m-1}\|\nabla f_T(x_{t}^{s+1})\|^{2} - \frac{f_T(x_m^{s}) - f_T(x_{m}^{s+1})}{\gamma}
    \]
    It holds trivially that for all $s\geq 1$,
    \begin{equation}\label{eqn:from-svrg}
      \sum\limits_{t=0}^{m-1}\|\nabla f_T(x_{t}^{s+1})\|^{2} = \frac{f_T(x_{m}^{s}) - f_T(x_m^{s+1})}{\gamma} + \delta_{s}
    \end{equation}
    and by  Proposition \ref{prop:svrg-prior} with $\xi = 1/4$,  for all $s\geq 1$,
    \begin{equation}\label{eqn:delta-zero}
      \begin{split}
        \mathbb{E}[\delta_{s} \mid \mc{F}_{s-1}]
        &=
        \sum\limits_{t=0}^{m-1}\mathbb{E}\left[\left\|\nabla f_T(x_{t}^{s+1})\right|^{2}\mid\mc{F}_{s-1}\right]
        -
        \frac{f_T(x_m^{s}) - \mathbb{E}[f_T(x_{m}^{s+1}) \mid \mc{F}_{s-1}] }{\gamma}\\
        &\leq 0. \end{split}
    \end{equation}
     Summing \eqref{eqn:from-svrg} over $s=1,\hdots,q$  yields 
    \begin{equation}\label{eqn:from-svrg-summed}
      \sum\limits_{s=1}^{q}\sum\limits_{i=0}^{m-1}\|\nabla f_T(x_{i}^{s+1})\|^{2}
      =
      \frac{f_T(x_m^{1}) -       f_T(x_{m}^{q+1})}{\gamma}
      + \sum\limits_{s=1}^{q}\delta_{s},
    \end{equation}
    Rearranging terms and noting that $f_T(x_m^{q+1}) \geq f^{*}$ results in
    \begin{equation}
    \label{eqn:from-svrg-rearranged}
      \gamma\sum\limits_{s=1}^{q}\sum\limits_{i=0}^{m-1}\|\nabla f_T(x_{i}^{s+1})\|^{2}
      \leq
      f_T(x_m^{1})
      - f^{*} + \gamma\sum\limits_{s=1}^{q}\delta_{s}.
    \end{equation}

  It follows that
  \begin{equation}\label{eqn:svrg-follow}
    \gamma \sum\limits_{s=1}^{q} \|\nabla f_T(x_{0}^{s+1})\|^{2}
      \leq
      f_T(x_m^{1})
      - f^{*} + \gamma\sum\limits_{s=1}^{q}\delta_{s}.
  \end{equation}
  For $r\geq 1$, let
  $\tau(\epsilon) \wedge r$ be the stopping time which is the minimum of
  $\tau(\epsilon)$ and the constant value $r$. 
  Applying Proposition \ref{mart-like-thm} together with \eqref{eqn:delta-zero}, it holds that
  \begin{equation}\label{eqn:svrg-delta-zero}
    \mathbb{E}\left[
    \sum\limits_{s=1}^{\tau(\epsilon)\wedge r}\delta_{s}
    \right] \leq 0
  \end{equation}
  Furthermore, by definition of $\tau$, 
  \begin{equation}\label{eqn:svrg-lb}
    \begin{split}
    \mathbb{E}\left[\sum\limits_{s=1}^{\tau(\epsilon) \wedge r}  \|\nabla f_T(x_{0}^{s+1})\|^{2}\right] 
     \geq
    \mathbb{E}\left[\sum\limits_{s=1}^{(\tau(\epsilon) \wedge r)-1} \|\nabla f_T(x_{0}^{s+1})\|^{2}\right]  
    \geq 
    &\mathbb{E}\left[\sum\limits_{s=1}^{(\tau(\epsilon) \wedge r)-1} \epsilon \, \right]  \\&\quad
    =
    \,\epsilon\,\mathbb{E}[(\tau(\epsilon) \wedge r)-1].
    \end{split}
  \end{equation}
  Combining \eqref{eqn:svrg-follow}, \eqref{eqn:svrg-delta-zero}, and \eqref{eqn:svrg-lb} yields
  $$
  \gamma\, \,\epsilon\, \mathbb{E}[(\tau(\epsilon) \wedge n) -1] \leq  f_T(x_{m}^{1}) - f^* 
  $$
  Rearranging terms in the above yields
  $$
  \mathbb{E}[ \tau(\epsilon) \wedge n] 
  \leq 
  \frac{f_T(x_{m}^1) - f^{*}}{\gamma\epsilon}  + 1.$$
  Applying the monotone convergence theorem, then,
  $$
  \mathbb{E}[ \tau(\epsilon)] 
  \leq
  \frac{f_T(x_{m}^1) - f^{*}}{\gamma \epsilon}  + 1.$$
  Next, specialize $\eta$ and $m$ to 
  $\eta = \xi/(Ln^{2/3})$  and $m = \lfloor n / (3\xi) \rfloor$ with $\xi=1/4$.
  Then by \eqref{from-svrg-3},
  $$\mathbb{E}[ \tau(\epsilon)] \leq \frac{40 L n^{2/3}(f_T(x_{m}^1) - f^{*})  }{ \epsilon}  + 1.$$
  
  \end{proof}

  \section{Generalization Analysis}
  \section*{Proof of Corollary \ref{cor:wass-svr}}
  \begin{proof}
    To begin, we establish that we may interchange derivatives and expectations in our definition of $f_G$, so that
    \begin{equation}\label{interchange}
      \nabla f_{G}(x) = \mathbb{E}_{y\sim \mu}\left[\nabla_x f(y,x)\right]
      \end{equation}
    To see why \eqref{interchange} holds, note first that under either of our Assumptions on $\mu$, the test distribution has a finite first moment:
    $\mathbb{E}_{y\sim\mu}[\|y\|] < \infty$.
    Then a sufficient  condition for
    \eqref{interchange} is that at each $x$ there be an Lipschitz function $g(y)$ such that
    $\|\nabla_{x} f(y,x+h)\| \leq g(y)$
    for all sufficiently small $h$  (Corollary 2.8.7 in \citep{bogachev2007measure}).
    Note that under Assumption \ref{asu:lipgrad}, it holds that,  $\|\nabla_{x} f(y,x+h)\| \leq \|\nabla_{x} f(y,x)\| + L\|h\|$.  Therefore, assume $\|h\|\leq 1$ and set $g(y) =\| \nabla_x f(y,x)\| + L$. Assumption \ref{asu:empirical-distance} guarantees that  $g$ is Lipschitz.

    Using \eqref{interchange} and following the reasoning used to establish \eqref{grad-dist}, it holds that
    \begin{align*}
        \|\nabla f_{G}(x_{\tau(\epsilon)}) -
        \nabla f_{T}(x_{\tau(\epsilon)})\|  &=
        \left\|
        \mathop{\mathbb{E}}\limits_{y\sim\mu}
        \left[\nabla_x f(y,x_{\tau(\epsilon)})\right] -
        \mathop{\mathbb{E}}_{y\sim\mu_T}\left[\nabla_x f(y,x_{\tau(\epsilon)})\right]
        \right\| \\
        &\leq
         G d_1(\mu,\mu_T).
    \end{align*}
    Therefore
    $$       \mathbb{E}[ \|\nabla f_{G}(x_{\tau(\epsilon)})\|] \leq 
        \mathbb{E}[\|\nabla f_{T}(x_{\tau(\epsilon)})\|  ]
        +
         G \mathbb{E}[d_1(\mu,\mu_T)].
$$
    Squaring and taking expectations, while noting that $d_1 \leq d_2$ (see  Remark 6.6 in \citep{villani2008}),
     \begin{align*}
       \mathbb{E}[ \|\nabla f_{G}(x_{\tau(\epsilon)})\|^2] \leq 
        2\mathbb{E}[\|\nabla f_{T}(x_{\tau(\epsilon)})\|^2  ]
        +
         2 G^2 \mathbb{E}[d_2(\mu,\mu_T)^2].
    \end{align*}
     If $J < \infty$, then we use the Wasserstein concentration bound from Theorem \ref{quantization} and the definition of $\tau(\epsilon)$ to obtain
     \begin{align*}
        \mathbb{E}[\|\nabla f_{G}(x_{\tau(\epsilon)})\|^2] \leq 
        2\epsilon  
        +
          2 G^2\kappa_d J n_V^{-3/d}.
     \end{align*}
     If $\mu$ is supported on at most $m$-points, then we may apply Theorem \ref{discrete-quantization}:
     \begin{align*}
        \mathbb{E}[\|\nabla f_{G}(x_{\tau(\epsilon)})\|^2] \leq 
        2\epsilon  
        +
          168 G^2\sqrt{\frac{m}{n_T}}.
     \end{align*}
  \end{proof}
  
\end{document}